\newtheorem{theorem}{Theorem}
\newtheorem{corollary}[theorem]{Corollary}
\newtheorem{definition}[theorem]{Definition}
\newtheorem{example}[theorem]{Example}
\newtheorem{lemma}[theorem]{Lemma}
\newenvironment{proof}[1][Proof]{\noindent\textbf{#1.} }{\ \rule{0.5em}{0.5em}}
\begin{document}

\title{\textbf{Nilpotent Evolution Algebras over arbitrary fields}}
\author{A. S. Hegazi, Hani Abdelwahab \\
Mathematics department, Faculty of Science, Mansoura University, Egypt\\
hegazi@mans.edu.eg\\
haniamar1985@gmail.com}
\date{}
\maketitle

\begin{abstract}
The paper is devoted to the study of annihilator extensions of evolution
algebras and suggests an approach to classify finite-dimensional nilpotent
evolution algebras. Subsequently nilpotent evolution algebras of dimension
up to four are classified.
\end{abstract}

\textbf{Keywords and phrases:} Nilpotent evolution algebra, Annihilator
extension, Automorphism group.

\bigskip

\textbf{2010 Mathematics Subject Classification:} 17D92, 17D99.

\bigskip

\section{Introduction}

Evolution algebras were introduced in 2006 by J.P. Tian and P. Vojtechovsky
in their paper \textquotedblleft Mathematical concepts of evolution algebras
in non-Mendelian genetics " (see \cite{V}). Later on, Tian laid the
foundations of evolution algebras in his monograph \cite{T}. These algebras
present many connections with other mathematical fields including graph
theory, group theory, Markov chains, dynamical systems, knot theory, $3$%
-manifolds and the study of the Riemann-Zeta function (see \cite{T}).

Evolution algebras are in general non-associative and do not belong to any
of the well-known classes of non-associative algebras such as Lie algebras,
alternative algebras, or Jordan algebras. Therefore, the research on these
algebras follows different paths (see \cite{L.M}, \cite{Omriov}, \cite{J.M}, 
\cite{U.A}, \cite{J.P}).

One of the classical problems in study of any class of algebras is to know
how many different (up to isomorphism) algebras exist for each dimension. In
this way in \cite{Jordan}, \cite{A. S. Hegazi}, \cite{Mazzola80} and \cite%
{Seeley}, the classifications of nilpotent Jordan algebras, nilpotent Lie
superalgebras, nilpotent associative algebras and nilpotent Lie algebras of
low dimensions were given.

In this paper we study the class of nilpotent evolution algebras. Our aim is
to describe a method for classifying nilpotent evolution algebras. In \cite%
{Omriov}, the equivalence between nilpotent evolution algebras and evolution
algebras which are defined by upper triangular matrices is proved. In \cite%
{T}, J. Tian defined an evolution algebra associated to any directed graph.
In \cite{Elduque}, A. Elduque and A. Labra\ considered the reverse
direction, a directed graph is attached to any evolution algebra and they
proved that nilpotency of an evolution algebra is equivalent to the
nonexistence of oriented cycles in the attached directed graph.

The paper is organized as follows. In Section \ref{prelim}, we give some
basic concepts about evolution algebras. Section \ref{construction}\ is
devoted to the description of construction of evolution algebras with
non-trivial annihilator as annihilator extensions of evolution algebras of
lower dimensions. In Section \ref{method}, we describe a method for
classifying nilpotent evolution algebras. In Section \ref{dim3}, we classify
nilpotent evolution algebras of dimension up to three over arbitrary fields.
In Section \ref{dim4}, four-dimensional nilpotent evolution algebras are
classified over an algebraic closed field of any characteristic and over $%
\mathbb{R}
$.

\section{Preliminaries}

\label{prelim}

\begin{definition}
\cite{T} An \emph{evolution algebra} is an algebra ${\mathcal{E}}$
containing a basis (as a vector space) $B=\left\{ e_{1},\ldots
,e_{n}\right\} $ such that $e_{i}e_{j}=0$ for any $1\leq i<j\leq n$. A basis
with this property is called a \emph{natural basis}.
\end{definition}

Given a natural basis $B=\left\{ e_{1},\ldots ,e_{n}\right\} $ of an
evolution algebra ${\mathcal{E}}$, 
\begin{equation*}
e_{i}^{2}=\sum_{j=1}^{n}\alpha _{ij}e_{j}
\end{equation*}%
for some scalars $\alpha _{ij}\in \mathbb{F}$, $1\leq i,j\leq n$. The matrix 
$A=\bigl(\alpha _{ij}\bigr)$ is the \emph{matrix of structural constants} of
the evolution algebra ${\mathcal{E}}$, relative to the natural basis $B$.

\begin{definition}
An \emph{ideal} $\mathcal{I}$ of an evolution algebra ${\mathcal{E}}$ is an
evolution algebra satisfying ${\mathcal{E}}\mathcal{I}\subseteq \mathcal{I}$.
\end{definition}

\smallskip Given an evolution algebra ${\mathcal{E}}$, consider its \emph{%
annihilator }%
\begin{equation*}
ann\left( {\mathcal{E}}\right) :=\left\{ x\in {\mathcal{E}}:x{\mathcal{E}}%
=0\right\} .
\end{equation*}

\begin{lemma}
\cite[Lemma 2.7]{Elduque} Let $B=\left\{ e_{1},\ldots ,e_{n}\right\} $ be a
natural basis of an evolution algebra ${\mathcal{E}}$. Then 
\begin{equation*}
ann\left( {\mathcal{E}}\right) =\mathrm{span}\left\{
e_{i}:e_{i}^{2}=0\right\} .
\end{equation*}
\end{lemma}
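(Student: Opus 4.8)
The plan is to establish the stated equality by proving the two set inclusions separately, in both cases exploiting the defining feature of a natural basis: the only products among basis vectors that can be nonzero are the squares $e_i^2$, since $e_ie_j=0$ whenever $i\neq j$ (this uses $e_ie_j=0$ for $i<j$ together with the commutativity of evolution algebras to also kill $e_ie_j$ for $i>j$). A preliminary reduction I would record first is that, by bilinearity of the product, an element $x\in{\mathcal{E}}$ satisfies $x{\mathcal{E}}=0$ if and only if $xe_k=0$ for every basis vector $e_k$, $1\leq k\leq n$. Thus membership in $ann({\mathcal{E}})$ can be tested against the finite set $B$ alone.

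For the inclusion $\mathrm{span}\{e_i:e_i^2=0\}\subseteq ann({\mathcal{E}})$, I would fix a basis vector $e_i$ with $e_i^2=0$ and verify that $e_ie_k=0$ for all $k$: the case $k=i$ is the hypothesis, and the case $k\neq i$ is the natural basis property. Hence $e_i\in ann({\mathcal{E}})$, and since $ann({\mathcal{E}})$ is a linear subspace it contains the span of all such $e_i$.

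For the reverse inclusion, I would take an arbitrary element $x=\sum_{i=1}^{n}\lambda_i e_i\in ann({\mathcal{E}})$ and compute $xe_k$ for each fixed $k$. Because $e_ie_k=0$ for every $i\neq k$, the sum $\sum_{i}\lambda_i(e_ie_k)$ collapses to the single surviving term $xe_k=\lambda_k e_k^2$. The annihilator condition then forces $\lambda_k e_k^2=0$ for every $k$, and since a scalar multiple of a nonzero vector of a vector space over a field vanishes only when the scalar is zero, we conclude $\lambda_k=0$ whenever $e_k^2\neq0$. Therefore $x=\sum_{k:\,e_k^2=0}\lambda_k e_k\in\mathrm{span}\{e_k:e_k^2=0\}$, which completes the argument.

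The computation is entirely routine; the single point that carries the whole proof is the collapse $xe_k=\lambda_k e_k^2$, which is exactly where the orthogonality of distinct natural basis vectors (and hence commutativity, to dispose of the products $e_ie_k$ with $i>k$) is indispensable. I do not anticipate any genuine obstacle beyond making this reduction explicit.
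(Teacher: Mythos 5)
Your proof is correct; the paper itself gives no proof of this lemma (it is cited from Elduque--Labra), and your two-inclusion argument --- testing membership in $ann\left( {\mathcal{E}}\right) $ against basis vectors and collapsing $xe_{k}=\lambda _{k}e_{k}^{2}$ via the orthogonality of distinct natural basis vectors --- is exactly the standard verification found in the cited source. Your explicit appeal to commutativity to dispose of the products $e_{i}e_{j}$ with $i>j$ is a worthwhile clarification, since the paper's definition of a natural basis only states $e_{i}e_{j}=0$ for $i<j$.
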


Hence $ann\left( {\mathcal{E}}\right) $ is an ideal of the evolution algebra 
${\mathcal{E}}$. Let $B^{\prime }=\left\{ e_{r+1},\ldots ,e_{n}\right\} $ be
a natural basis of $ann\left( {\mathcal{E}}\right) $. Then the quotient ${%
\mathcal{E}}/ann\left( {\mathcal{E}}\right) $ is an evolution algebra with a
natural basis $B^{\prime \prime }=\left\{ \bar{e}_{i}=e_{i}+ann\left( {%
\mathcal{E}}\right) :1\leq i\leq r\right\} $.

Note that we can reorder the natural basis $B$ of an evolution algebra ${%
\mathcal{E}}$ so that $B^{\prime }=\{e_{r+1},\ldots ,e_{n}\}$ be a natural
basis of $ann\left( {\mathcal{E}}\right) .$

\bigskip

Given an evolution algebra, we introduce the following sequence of subspaces:%
\begin{equation*}
\begin{array}{cc}
{\mathcal{E}}^{\left\langle 1\right\rangle }={\mathcal{E}}, & {\mathcal{E}}%
^{\left\langle k+1\right\rangle }={\mathcal{E}}^{\left\langle k\right\rangle
}{\mathcal{E}}.%
\end{array}%
\end{equation*}

\begin{definition}
An algebra ${\mathcal{E}}$ is called \emph{nilpotent} if there exists $n\in {%
\mathbb{N}}$ such that ${\mathcal{E}}^{\left\langle n\right\rangle }=0$, and
the minimal such number is called the \emph{index of nilpotency}.
\end{definition}

One can easily verify that if ${\mathcal{E}}$ is nilpotent\emph{\ }then $%
ann\left( {\mathcal{E}}\right) \neq 0$. Moreover, ${\mathcal{E}}$ is
nilpotent\emph{\ }if and only if ${\mathcal{E}}/ann\left( {\mathcal{E}}%
\right) $ is nilpotent.

\section{Constructing Nilpotent evolution Algebra}

\label{construction}Let ${\mathcal{E}}$ be an evolution algebra with a
natural basis $B_{{\mathcal{E}}}=\left\{ e_{1},\ldots ,e_{m}\right\} $ and $%
V $ be a vector space with a basis $B_{V}=\left\{ e_{m+1},\ldots
,e_{n}\right\} $. Let $\theta :{\mathcal{E\times E\longrightarrow }}V$ be a
bilinear map on ${\mathcal{E}}$, set ${\mathcal{E}}_{\theta }={\mathcal{%
E\oplus }}V$ define a multiplication on ${\mathcal{E}}_{\theta }$ by:%
\begin{equation*}
e_{i}\star e_{j}=e_{i}e_{j}\mid _{{\mathcal{E}}}+\theta \left(
e_{i},e_{j}\right) \mbox{ if }1\leq i,j\leq m,\mbox{ and otherwise }%
e_{i}\star e_{j}=0.
\end{equation*}%
Then ${\mathcal{E}}_{\theta }$ is an evolution algebra with a natural basis $%
B_{{\mathcal{E}}_{\theta }}=\left\{ e_{1},\ldots ,e_{n}\right\} $ if and
only if $\theta \left( e_{i},e_{j}\right) =0$ for all $i\neq j$. Denote the
space of all bilinear $\theta $ such that $\theta \left( e_{i},e_{j}\right)
=0$ for all $i\neq j\mathcal{\ }$by ${\mathcal{Z}}\left( {\mathcal{E\times E}%
},V\right) $. For $\theta \in {\mathcal{Z}}\left( {\mathcal{E\times E}}%
,V\right) $ with $\dim V=n-m$, the evolution algebra ${\mathcal{E}}_{\theta
}={\mathcal{E\oplus }}V$ is called a $\left( n-m\right) $\emph{-dimensional
annihilator extension} of ${\mathcal{E}}$ by $V.$

\bigskip

Now, for a linear map $f\in Hom\left( {\mathcal{E}},V\right) $, if we define 
$\theta _{f}:{\mathcal{E\times E\longrightarrow }}V$ by $\theta _{f}\left(
e_{i},e_{j}\right) =f\left( e_{i}e_{j}\right) $, then $\theta _{f}\in {%
\mathcal{Z}}\left( {\mathcal{E\times E}},V\right) $. One can easily verify
that the space defined by ${\mathcal{B}}\left( {\mathcal{E\times E}}%
,V\right) =\{\theta _{f}:f\in Hom\left( {\mathcal{E}},V\right) \}$ is a
subspace of ${\mathcal{Z}}\left( {\mathcal{E\times E}},V\right) $.

\begin{lemma}
\label{equivalent cocycles}Let ${\mathcal{E}}$ be an evolution algebra with
a natural basis $B_{{\mathcal{E}}}=\left\{ e_{1},\ldots ,e_{m}\right\} $ and 
$V$ be a vector space with a basis $B_{V}=\left\{ e_{m+1},\ldots
,e_{n}\right\} $. Suppose that $\theta \in {\mathcal{Z}}\left( {\mathcal{%
E\times E}},V\right) $ and $\theta _{f}\in {\mathcal{B}}\left( {\mathcal{%
E\times E}},V\right) $. Then ${\mathcal{E}}_{\theta }\cong {\mathcal{E}}%
_{\theta +\theta _{f}}.$
\end{lemma}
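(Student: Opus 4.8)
The plan is to exhibit an explicit algebra isomorphism $\varphi\colon{\mathcal{E}}_{\theta}\to{\mathcal{E}}_{\theta+\theta_{f}}$, in the spirit of the classical fact that cohomologous cocycles yield isomorphic extensions. Since the two algebras share the underlying space ${\mathcal{E}}\oplus V$ and differ only by the coboundary $\theta_{f}(e_{i},e_{j})=f(e_{i}e_{j})$ induced by $f\in Hom({\mathcal{E}},V)$, the natural candidate is the map that fixes $V$ and adds a correction by $f$ on ${\mathcal{E}}$. Concretely I would define, on the natural basis $B_{{\mathcal{E}}_{\theta}}=\{e_{1},\ldots ,e_{n}\}$,
\begin{equation*}
\varphi (e_{i})=e_{i}+f(e_{i})\quad (1\leq i\leq m),\qquad \varphi (e_{j})=e_{j}\quad (m+1\leq j\leq n),
\end{equation*}
and extend linearly.

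First I would check that $\varphi$ is a linear bijection. Writing $\varphi =\mathrm{id}+N$, where $N$ agrees with $f$ on ${\mathcal{E}}$ and annihilates $V$, one has $N^{2}=0$ because $f$ takes values in $V=\ker N$; hence $\varphi $ is invertible with $\varphi ^{-1}=\mathrm{id}-N$, that is $\varphi ^{-1}(e_{i})=e_{i}-f(e_{i})$ for $1\leq i\leq m$ while $\varphi ^{-1}$ fixes $V$. So bijectivity is immediate and costs nothing.

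The substantive step is verifying multiplicativity $\varphi (x\star y)=\varphi (x)\bullet \varphi (y)$, where $\star $ and $\bullet $ denote the products of ${\mathcal{E}}_{\theta }$ and ${\mathcal{E}}_{\theta +\theta _{f}}$ respectively. Since both products vanish as soon as one argument lies in $V$, and since $\varphi $ fixes $V$, the only case to treat is $x=e_{i}$, $y=e_{j}$ with $1\leq i,j\leq m$. On the left, $e_{i}\star e_{j}=e_{i}e_{j}\mid_{{\mathcal{E}}}+\theta (e_{i},e_{j})$, and applying $\varphi $ splits this into its ${\mathcal{E}}$-part, which picks up the extra term $f(e_{i}e_{j}\mid_{{\mathcal{E}}})$, plus the $V$-part $\theta (e_{i},e_{j})$, which is fixed. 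On the right, the corrections $f(e_{i}),f(e_{j})$ lie in $V$ and so drop out of the product, leaving $e_{i}e_{j}\mid_{{\mathcal{E}}}+\theta (e_{i},e_{j})+\theta _{f}(e_{i},e_{j})$. The two expressions coincide exactly because $\theta _{f}(e_{i},e_{j})=f(e_{i}e_{j})=f(e_{i}e_{j}\mid_{{\mathcal{E}}})$, so the coboundary term on the right is precisely the extra term produced by $\varphi $ on the left. This matching of the single correction term (including the correct sign in the definition of $\varphi $) is the one point that has to be gotten right; everything else is formal. Finally, since $\theta \in {\mathcal{Z}}({\mathcal{E}}\times {\mathcal{E}},V)$ forces $\theta (e_{i},e_{j})=0$ and $e_{i}e_{j}\mid_{{\mathcal{E}}}=0$ for $i\neq j$, one gets $\varphi (e_{i})\bullet \varphi (e_{j})=0$ for $i\neq j$, so $\varphi $ even carries the natural basis of ${\mathcal{E}}_{\theta }$ to a natural basis of ${\mathcal{E}}_{\theta +\theta _{f}}$, confirming that the isomorphism respects the evolution-algebra structure.
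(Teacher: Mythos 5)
Your proposal is correct and takes essentially the same route as the paper: the paper's isomorphism $\sigma\left(\sum_{i=1}^{n}\alpha _{i}e_{i}\right)=\sum_{i=1}^{n}\alpha _{i}e_{i}+f\left(\sum_{i=1}^{m}\alpha _{i}e_{i}\right)$ is exactly your map $\varphi$ (namely $e_{i}\mapsto e_{i}+f(e_{i})$ on ${\mathcal{E}}$ and the identity on $V$), and the multiplicativity check is the same cancellation of the coboundary term $f(e_{i}e_{j})$. Your observation that $\varphi=\mathrm{id}+N$ with $N^{2}=0$ merely makes explicit the invertibility the paper asserts without proof.
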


\begin{proof}
Let $x=\overset{n}{\underset{i=1}{\sum }}\alpha _{i}e_{i}\in {\mathcal{E}}%
_{\theta }$, define a linear map $\sigma :{\mathcal{E}}_{\theta
}\longrightarrow {\mathcal{E}}_{\theta +\theta _{f}}$ by 
\begin{equation*}
\sigma \left( \overset{n}{\underset{i=1}{\sum }}\alpha _{i}e_{i}\right) =%
\overset{n}{\underset{i=1}{\sum }}\alpha _{i}e_{i}+f\left( \overset{m}{%
\underset{i=1}{\sum }}\alpha _{i}e_{i}\right) .
\end{equation*}%
Then $\sigma $ is an invertible linear transformation. Moreover, let $x=%
\overset{n}{\underset{i=1}{\sum }}\alpha _{i}e_{i},y=\overset{n}{\underset{%
i=1}{\sum }}\beta _{i}e_{i}\in {\mathcal{E}}_{\theta }$ then 
\begin{eqnarray*}
\sigma \left( x\star _{{\mathcal{E}}_{\theta }}y\right) &=&\sigma \left( 
\overset{m}{\underset{i=1}{\sum }}\alpha _{i}\beta _{i}\left(
e_{i}^{2}+\theta \left( e_{i},e_{i}\right) \right) \right) =\overset{m}{%
\underset{i=1}{\sum }}\alpha _{i}\beta _{i}\left( e_{i}^{2}+\theta \left(
e_{i},e_{i}\right) +f\left( e_{i}^{2}\right) \right) \\
&=&\overset{m}{\underset{i=1}{\sum }}\alpha _{i}\beta _{i}\left(
e_{i}^{2}+\left( \theta +\theta _{f}\right) \left( e_{i},e_{i}\right)
\right) =\sigma \left( x\right) \star _{{\mathcal{E}}_{\theta +\theta
_{f}}}\sigma \left( y\right) .
\end{eqnarray*}%
Hence ${\mathcal{E}}_{\theta }\cong {\mathcal{E}}_{\theta +\theta _{f}}.$
\end{proof}

Hence the isomorphism type of ${\mathcal{E}}_{\theta }$ only depends on the
element $\theta +{\mathcal{B}}\left( {\mathcal{E\times E}},V\right) $.
Therefore we consider the quotient space 
\begin{equation*}
{\mathcal{H}}\left( {\mathcal{E\times E}},V\right) ={\mathcal{Z}}\left( {%
\mathcal{E\times E}},V\right) /{\mathcal{B}}\left( {\mathcal{E\times E}}%
,V\right) .
\end{equation*}%
The annihilator extension corresponding to $\theta =0$, and consequently
corresponding to any element in ${\mathcal{B}}\left( {\mathcal{E\times E}}%
,V\right) $ is called a \emph{trivial extension}.

Suppose that ${\mathcal{E}}$ is an evolution algebra with a natural basis $%
B_{{\mathcal{E}}}=\left\{ e_{1},\ldots ,e_{m}\right\} $ and $V$ be a vector
space with a basis $B_{V}=\left\{ e_{m+1},\ldots ,e_{n}\right\} $. Then a
symmetric bilinear map $\theta \in {\mathcal{Z}}\left( {\mathcal{E\times E}}%
,V\right) $ can be written as $\theta \left( e_{i},e_{i}\right) =\underset{%
j=1}{\overset{n-m}{\sum }}\theta _{j}\left( e_{i},e_{i}\right) e_{m+j}$,
where $\theta _{j}\in {\mathcal{Z}}\left( {\mathcal{E\times E}},\mathbb{F}%
\right) $. Further, $\theta \in {\mathcal{B}}\left( {\mathcal{E\times E}}%
,V\right) $ if and only if all $\theta _{j}\in {\mathcal{B}}\left( {\mathcal{%
E\times E}},\mathbb{F}\right) $.

\begin{lemma}
\label{cobound}$\dim {\mathcal{H}}\left( {\mathcal{E\times E}},\mathbb{F}%
\right) =\dim {\mathcal{E}}-\dim {\mathcal{E}}^{\left\langle 2\right\rangle
} $.
\end{lemma}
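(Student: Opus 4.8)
The plan is to compute the dimension of $\mathcal{H}\left(\mathcal{E}\times\mathcal{E},\mathbb{F}\right)=\mathcal{Z}\left(\mathcal{E}\times\mathcal{E},\mathbb{F}\right)/\mathcal{B}\left(\mathcal{E}\times\mathcal{E},\mathbb{F}\right)$ by finding the dimensions of the numerator and the denominator separately. First I would determine $\dim\mathcal{Z}\left(\mathcal{E}\times\mathcal{E},\mathbb{F}\right)$. By definition, $\mathcal{Z}$ consists of those bilinear forms $\theta$ on $\mathcal{E}$ with values in $\mathbb{F}$ satisfying $\theta\left(e_{i},e_{j}\right)=0$ for all $i\neq j$. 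Such a $\theta$ is therefore completely determined by the scalars $\theta\left(e_{i},e_{i}\right)$ for $1\leq i\leq m$, which may be chosen freely. Hence $\mathcal{Z}\left(\mathcal{E}\times\mathcal{E},\mathbb{F}\right)$ has dimension $m=\dim\mathcal{E}$, with a basis given by the duals picking out each diagonal entry.

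Next I would compute $\dim\mathcal{B}\left(\mathcal{E}\times\mathcal{E},\mathbb{F}\right)$. Recall $\mathcal{B}=\left\{\theta_{f}:f\in Hom\left(\mathcal{E},\mathbb{F}\right)\right\}$ where $\theta_{f}\left(e_{i},e_{j}\right)=f\left(e_{i}e_{j}\right)$. The natural approach is to consider the linear map $\Phi:Hom\left(\mathcal{E},\mathbb{F}\right)\longrightarrow\mathcal{Z}\left(\mathcal{E}\times\mathcal{E},\mathbb{F}\right)$ sending $f\mapsto\theta_{f}$, whose image is exactly $\mathcal{B}$. By the rank–nullity theorem, $\dim\mathcal{B}=\dim Hom\left(\mathcal{E},\mathbb{F}\right)-\dim\ker\Phi=m-\dim\ker\Phi$. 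So the task reduces to identifying the kernel of $\Phi$. A functional $f$ lies in $\ker\Phi$ exactly when $f\left(e_{i}e_{j}\right)=0$ for all $i,j$, equivalently when $f$ vanishes on all products $e_{i}e_{j}$. Since $e_{i}e_{j}=0$ for $i\neq j$, this is the same as requiring $f\left(e_{i}^{2}\right)=0$ for every $i$, i.e. $f$ annihilates the span of all the $e_{i}^{2}$. But that span is precisely $\mathcal{E}^{\left\langle 2\right\rangle }=\mathcal{E}\mathcal{E}$, so $\ker\Phi=\left\{f:f|_{\mathcal{E}^{\left\langle 2\right\rangle }}=0\right\}$, which is the annihilator of the subspace $\mathcal{E}^{\left\langle 2\right\rangle }$ in the dual. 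Therefore $\dim\ker\Phi=\dim\mathcal{E}-\dim\mathcal{E}^{\left\langle 2\right\rangle }$.

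Combining these gives $\dim\mathcal{B}=m-\left(\dim\mathcal{E}-\dim\mathcal{E}^{\left\langle 2\right\rangle }\right)=\dim\mathcal{E}^{\left\langle 2\right\rangle }$, and then
\begin{equation*}
\dim\mathcal{H}\left(\mathcal{E}\times\mathcal{E},\mathbb{F}\right)=\dim\mathcal{Z}-\dim\mathcal{B}=\dim\mathcal{E}-\dim\mathcal{E}^{\left\langle 2\right\rangle },
\end{equation*}
which is the desired formula. The one point that requires genuine care, rather than routine bookkeeping, is the identification $\ker\Phi=\left(\mathcal{E}^{\left\langle 2\right\rangle }\right)^{\circ}$: I must verify that the span of $\left\{e_{i}^{2}:1\leq i\leq m\right\}$ really equals $\mathcal{E}^{\left\langle 2\right\rangle }=\mathcal{E}\mathcal{E}$. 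This follows because products of arbitrary elements expand bilinearly and the cross terms $e_{i}e_{j}$ ($i\neq j$) vanish in an evolution algebra, leaving only linear combinations of the $e_{i}^{2}$; so the two subspaces coincide and the dual-annihilator dimension count is justified.
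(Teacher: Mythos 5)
Your proof is correct, and it follows the same overall reduction as the paper --- both arguments come down to the two facts $\dim {\mathcal{Z}}\left( {\mathcal{E\times E}},\mathbb{F}\right) =\dim {\mathcal{E}}$ and $\dim {\mathcal{B}}\left( {\mathcal{E\times E}},\mathbb{F}\right) =\dim {\mathcal{E}}^{\left\langle 2\right\rangle }$ --- but the key step is executed by a different mechanism. The paper fixes a basis $x_{1},\ldots ,x_{k}$ of ${\mathcal{E}}^{\left\langle 2\right\rangle }$, takes dual functionals $x_{i}^{\ast }$, and asserts that $\theta _{x_{1}^{\ast }},\ldots ,\theta _{x_{k}^{\ast }}$ form a basis of ${\mathcal{B}}\left( {\mathcal{E\times E}},\mathbb{F}\right) $; you instead apply rank--nullity to the map $\Phi :f\mapsto \theta _{f}$ and identify $\ker \Phi $ with the annihilator of ${\mathcal{E}}^{\left\langle 2\right\rangle }$ in the dual space. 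These are dual formulations of the same dimension count, but yours is the more self-contained: the paper leaves unverified both that the $\theta _{x_{i}^{\ast }}$ span ${\mathcal{B}}$ and that they are linearly independent, and checking either claim requires exactly the fact you isolate and prove explicitly, namely $\mathrm{span}\left\{ e_{i}^{2}:1\leq i\leq m\right\} ={\mathcal{E}}^{\left\langle 2\right\rangle }$, so that a functional yields the zero element of ${\mathcal{B}}$ precisely when it kills ${\mathcal{E}}^{\left\langle 2\right\rangle }$. What the paper's version buys in exchange is an explicit basis of ${\mathcal{B}}\left( {\mathcal{E\times E}},\mathbb{F}\right) $, which is what the authors actually exploit in the subsequent examples (computing $\theta _{e_{2}^{\ast }}$, $\theta _{e_{3}^{\ast }}$, and so on); your version delivers the dimension formula with less bookkeeping but produces no distinguished spanning set.
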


\begin{proof}
We claim that $\dim {\mathcal{B}}\left( {\mathcal{E\times E}},\mathbb{F}%
\right) =\dim {\mathcal{E}}^{\left\langle 2\right\rangle }$. Let $%
x_{1},x_{2},\ldots ,x_{k}$ be a basis of ${\mathcal{E}}^{\left\langle
2\right\rangle }$, then $x_{1}^{\ast },x_{2}^{\ast },\ldots ,x_{k}^{\ast }$
are linearly independent in ${\mathcal{E}}^{\ast }$ where $x_{i}^{\ast }$ is
defined by $x_{i}^{\ast }(x_{i})=1$ and $x_{i}^{\ast }(x_{j})=0$ if $i\neq j$%
. Let $\theta _{f}\in {\mathcal{B}}\left( {\mathcal{E\times E}},\mathbb{F}%
\right) $, then $\theta _{f}=\alpha _{1}\theta _{x_{1}^{\ast }}+\alpha
_{2}\theta _{x_{1}^{\ast }}+\cdots +\alpha _{k}\theta _{x_{k}^{\ast }}$ and $%
\theta _{x_{1}^{\ast }},\theta _{x_{1}^{\ast }},\ldots ,\theta _{x_{k}^{\ast
}}$are linearly independent in ${\mathcal{B}}\left( {\mathcal{E\times E}},%
\mathbb{F}\right) $. Hence $\dim {\mathcal{B}}\left( {\mathcal{E\times E}},%
\mathbb{F}\right) =\dim {\mathcal{E}}^{\left\langle 2\right\rangle }$ and $%
\dim {\mathcal{H}}\left( {\mathcal{E\times E}},\mathbb{F}\right) =\dim {%
\mathcal{E}}-\dim {\mathcal{E}}^{\left\langle 2\right\rangle }$.
\end{proof}

\bigskip

Suppose that ${\mathcal{E}}$ is an evolution algebra with a natural basis $%
B_{{\mathcal{E}}}=\left\{ e_{1},\ldots ,e_{m}\right\} $. Let us denote by $%
Sym\left( {\mathcal{E}}\right) $ the space of all symmetric bilinear forms
on ${\mathcal{E}}$. Then $Sym\left( {\mathcal{E}}\right) =\left\langle
\delta _{e_{i},e_{j}}:1\leq i\leq j\leq m\right\rangle $ while ${\mathcal{Z}}%
\left( {\mathcal{E\times E}},\mathbb{F}\right) =\left\langle \delta
_{e_{i},e_{i}}:1=1,2,\ldots ,m\right\rangle $, where a bilinear form $\delta
_{e_{i},e_{j}}:{\mathcal{E\times E\longrightarrow }}\mathbb{F}$ is defined
by $\delta _{e_{i},e_{j}}\left( e_{l},e_{m}\right) =1$ if $\left\{
i,j\right\} =\left\{ l,m\right\} $, and otherwise it takes the value zero.

\begin{example}
Let ${\mathcal{E}}:e_{1}^{2}=e_{2}$ be a 2-dimensional nilpotent evolution
algebra. Here ${\mathcal{Z}}\left( {\mathcal{E\times E}},\mathbb{F}\right)
=\left\langle \delta _{e_{1},e_{1}},\delta _{e_{2},e_{2}}\right\rangle $, by 
\emph{Lemma}$\ $\emph{\ref{cobound}}, ${\mathcal{B}}\left( {\mathcal{E\times
E}},\mathbb{F}\right) $ is spanned by $\theta _{e_{2}^{\ast }}$. Let $\theta
_{e_{2}^{\ast }}=\alpha \delta _{e_{1},e_{1}}+\beta \delta _{e_{2},e_{2}}\in 
{\mathcal{Z}}\left( {\mathcal{E\times E}},\mathbb{F}\right) $. It follows
that $\alpha =\theta _{e_{2}^{\ast }}\left( e_{1},e_{1}\right) =e_{2}^{\ast
}\left( e_{1}^{2}\right) =1$, and $\beta =\theta _{e_{2}^{\ast }}\left(
e_{2},e_{2}\right) =e_{2}^{\ast }\left( e_{2}^{2}\right) =0$ then $\theta
_{e_{2}^{\ast }}=\delta _{e_{1},e_{1}}$. Hence ${\mathcal{B}}\left( {%
\mathcal{E\times E}},\mathbb{F}\right) =\left\langle \delta
_{e_{1},e_{1}}\right\rangle $ and ${\mathcal{H}}\left( {\mathcal{E\times E}},%
\mathbb{F}\right) =\left\langle \delta _{e_{2},e_{2}}\right\rangle $.
\end{example}

\begin{example}
Let ${\mathcal{E}}:e_{1}^{2}=e_{3},e_{2}^{2}=e_{3}$ be a 3-dimensional
nilpotent evolution algebra. Here ${\mathcal{Z}}\left( {\mathcal{E\times E}},%
\mathbb{F}\right) =\left\langle \delta _{e_{1},e_{1}},\delta
_{e_{2},e_{2}},\delta _{e_{3},e_{3}}\right\rangle $, by \emph{Lemma} \emph{%
\ref{cobound}}, ${\mathcal{B}}\left( {\mathcal{E\times E}},\mathbb{F}\right) 
$ is spanned by $\theta _{e_{3}^{\ast }}$. Let $\theta _{e_{3}^{\ast }}=%
\underset{i=1}{\overset{3}{\sum }}\alpha _{i}\delta _{e_{i},e_{i}}$. Then $%
\alpha _{1}=\alpha _{2}=1$, and $\alpha _{3}=0$. Hence ${\mathcal{B}}\left( {%
\mathcal{E\times E}},\mathbb{F}\right) =\left\langle \delta
_{e_{1},e_{1}}+\delta _{e_{2},e_{2}}\right\rangle $ and ${\mathcal{H}}\left( 
{\mathcal{E\times E}},\mathbb{F}\right) =\left\langle \delta
_{e_{1},e_{1}},\delta _{e_{3},e_{3}}\right\rangle $.
\end{example}

\bigskip

Suppose that ${\mathcal{E}}_{\theta }={\mathcal{E\oplus }}V$ is an evolution
algebra then $VV=0$ and so $ann({\mathcal{E}}_{\theta })\supseteq V\neq 0$.
It means that by this way we construct evolution algebras with $ann({%
\mathcal{E}}_{\theta })\neq 0$. Further, any evolution algebras with $%
ann\left( {\mathcal{E}}\right) \neq 0$ can be obtained by this way.

\begin{lemma}
\label{construct}Let ${\mathcal{E}}$ be an evolution algebra and $ann\left( {%
\mathcal{E}}\right) \neq 0$. Then ${\mathcal{E}}$ is an annihilator
extension of a lower dimensional evolution algebra.
\end{lemma}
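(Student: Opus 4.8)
The plan is to realize ${\mathcal{E}}$ directly as an annihilator extension by splitting off its annihilator as the space $V$ from Section \ref{construction}. First I would fix a natural basis $B=\{e_{1},\ldots,e_{n}\}$ of ${\mathcal{E}}$ and, using the remark following \cite[Lemma 2.7]{Elduque}, reorder it so that $B'=\{e_{m+1},\ldots,e_{n}\}$ is a natural basis of $ann({\mathcal{E}})$; since $ann({\mathcal{E}})\neq 0$ this gives $m<n$. I would set $V=ann({\mathcal{E}})=\mathrm{span}\{e_{m+1},\ldots,e_{n}\}$ and let $\pi$ be the linear projection of ${\mathcal{E}}$ onto $\mathrm{span}\{e_{1},\ldots,e_{m}\}$ along $V$.

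Next I would equip ${\mathcal{E}}'=\mathrm{span}\{e_{1},\ldots,e_{m}\}$ with the product $e_{i}\cdot e_{j}=\pi(e_{i}e_{j})$. For $i\neq j$ the natural-basis property of $B$ gives $e_{i}e_{j}=0$, hence $e_{i}\cdot e_{j}=0$, so $\{e_{1},\ldots,e_{m}\}$ is a natural basis and ${\mathcal{E}}'$ is an evolution algebra of dimension $m<n$. I would then define $\theta:{\mathcal{E}}'\times{\mathcal{E}}'\to V$ by $\theta(e_{i},e_{j})=e_{i}e_{j}-\pi(e_{i}e_{j})$, the $V$-component of the original product, and extend it bilinearly. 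Again $\theta(e_{i},e_{j})=0$ whenever $i\neq j$, so $\theta\in{\mathcal{Z}}({\mathcal{E}}'\times{\mathcal{E}}',V)$ and the extension ${\mathcal{E}}'_{\theta}={\mathcal{E}}'\oplus V$ from Section \ref{construction} is well defined.

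Finally I would check that the map sending each $e_{i}$ to itself is an isomorphism ${\mathcal{E}}\to{\mathcal{E}}'_{\theta}$. For $1\leq i,j\leq m$ the defining rule of the extension gives $e_{i}\star e_{j}=\pi(e_{i}e_{j})+\theta(e_{i},e_{j})=e_{i}e_{j}$, reproducing the product of ${\mathcal{E}}$; and whenever an index exceeds $m$ the corresponding basis vector lies in $V\subseteq ann({\mathcal{E}})$, so both the $\star$-product in ${\mathcal{E}}'_{\theta}$ and the product in ${\mathcal{E}}$ vanish. Thus the two multiplications agree on all pairs of basis vectors, yielding ${\mathcal{E}}\cong{\mathcal{E}}'_{\theta}$ with $\dim{\mathcal{E}}'<\dim{\mathcal{E}}$.

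The argument is essentially bookkeeping once the vector-space splitting ${\mathcal{E}}={\mathcal{E}}'\oplus V$ is arranged, so I expect no serious obstacle. The only point needing care is confirming that $\theta$ vanishes off the diagonal, which is precisely what places it in ${\mathcal{Z}}({\mathcal{E}}'\times{\mathcal{E}}',V)$ and guarantees that ${\mathcal{E}}'_{\theta}$ is genuinely an evolution algebra and not merely a general algebra.
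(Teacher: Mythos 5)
Your proposal is correct and takes essentially the same approach as the paper: both split ${\mathcal{E}}=\mathrm{span}\left\{ e_{1},\ldots ,e_{m}\right\} \oplus ann\left( {\mathcal{E}}\right) $ and define $\theta $ as the annihilator component of the products of basis vectors, then observe that the identity map on the underlying vector space is an isomorphism onto the resulting extension. The only difference is bookkeeping: the paper takes the base algebra to be the quotient ${\mathcal{E}}/ann\left( {\mathcal{E}}\right) $ together with the linear section $\sigma \left( \bar{e}_{i}\right) =e_{i}$ and sets $\theta \left( \bar{x},\bar{y}\right) =\sigma \left( \bar{x}\right) \sigma \left( \bar{y}\right) -\sigma \left( \bar{x}\bar{y}\right) $, whereas you use the complementary subspace with the projected product $\pi \left( e_{i}e_{j}\right) $; under the identification $\bar{e}_{i}\leftrightarrow e_{i}$ these give the same base algebra, the same cocycle, and the same isomorphism.
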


\begin{proof}
Suppose that ${\mathcal{E}}$ is an evolution algebra with a natural basis $%
B=\left\{ e_{1},\ldots ,e_{n}\right\} $. Let $B^{\prime }=\left\{
e_{r+1},\ldots ,e_{n}\right\} $ be a natural basis of $ann\left( {\mathcal{E}%
}\right) $ then the quotient ${\mathcal{E}}/ann\left( {\mathcal{E}}\right) $
is an evolution algebra with a natural basis $B^{\prime \prime }=\{\bar{e}%
_{i}=e_{i}+ann\left( {\mathcal{E}}\right) :1\leq i\leq r\}$. If $\bar{x}=%
\overset{r}{\underset{i=1}{\sum }}\alpha _{i}\bar{e}_{i}\in {\mathcal{E}}%
/ann\left( {\mathcal{E}}\right) $\ then we define an injective linear map $%
\sigma :{\mathcal{E}}/ann\left( {\mathcal{E}}\right) \longrightarrow {%
\mathcal{E}}$ by $\sigma \left( \bar{x}\right) =\overset{r}{\underset{i=1}{%
\sum }}\alpha _{i}e_{i}$. For any $\bar{x},\bar{y}\in {\mathcal{E}}%
/ann\left( {\mathcal{E}}\right) $ we have $\sigma (\bar{x})\sigma (\bar{y}%
)-\sigma (\bar{x}\bar{y})\in ann\left( {\mathcal{E}}\right) $ (let $\pi :{%
\mathcal{E}}\longrightarrow {\mathcal{E}}/ann\left( {\mathcal{E}}\right) $
be a projection map then $\pi \left( \sigma \left( \bar{x}\right) \right) =%
\bar{x}$ and hence $\sigma (\bar{x})\sigma (\bar{y})-\sigma (\bar{x}\bar{y}%
)\in \ker \pi $). Then $\theta :{\mathcal{E}}/ann\left( {\mathcal{E}}\right)
\times {\mathcal{E}}/ann\left( {\mathcal{E}}\right) \longrightarrow
ann\left( {\mathcal{E}}\right) $ defined by $\theta (\bar{x},\bar{y})=\sigma
(\bar{x})\sigma (\bar{y})-\sigma (\bar{x}\bar{y})\in {\mathcal{Z}}\left( {%
\mathcal{E\times E}},V\right) $. It remains to show that $({\mathcal{E}}%
/ann\left( {\mathcal{E}}\right) )_{\theta }={\mathcal{E}}/ann\left( {%
\mathcal{E}}\right) \oplus ann\left( {\mathcal{E}}\right) $ is isomorphic to 
${\mathcal{E}}$. If $x\in {\mathcal{E}}$, then $x$ can be uniquely written
as $x=\sigma (\bar{y})+z$, where $\bar{y}\in {\mathcal{E}}/ann\left( {%
\mathcal{E}}\right) $ and $z\in ann\left( {\mathcal{E}}\right) $. Defining $%
\phi :{\mathcal{E}}\longrightarrow ({\mathcal{E}}/ann\left( {\mathcal{E}}%
\right) )_{\theta }$ by $\phi (x)=\phi \left( \sigma (\bar{y})+z\right) =%
\bar{y}+z$, then $\phi $ is bijective and%
\begin{eqnarray*}
\phi (x_{1}x_{2}) &=&\phi \left( (\sigma (\bar{y}_{1})+z_{1})(\sigma (\bar{y}%
_{2})+z_{2})\right) =\phi (\sigma (\bar{y}_{1})\sigma (\bar{y}_{2})) \\
&=&\phi (\sigma (\bar{y}_{1}\bar{y}_{2})+\theta (\bar{y}_{1},\bar{y}_{2}))=%
\bar{y}_{1}\bar{y}_{2}+\theta (\bar{y}_{1},\bar{y}_{2}) \\
&=&(\bar{y}_{1}+z_{1})\star (\bar{y}_{2}+z_{2})=\phi (x_{1})\star \phi
(x_{2}).
\end{eqnarray*}%
Then $\phi $ is an isomorphism.
\end{proof}

So in particular all nilpotent evolution algebras can be obtained by this
way.

\section{The Classification Method}

\label{method}In Section \ref{construction}, Lemma \ref{construct}, we show
that a nilpotent evolution algebra ${\mathcal{E}}$ can be obtained as an
annihilator extension of $M={\mathcal{E}}/ann\left( {\mathcal{E}}\right) $
by $V=ann\left( {\mathcal{E}}\right) $. Now, suppose that ${\mathcal{E}}%
_{\theta }$ is an annihilator extension of ${\mathcal{E}}$ by $V$, if $ann({%
\mathcal{E}}_{\theta })\neq V$ then ${\mathcal{E}}_{\theta }$ can be
obtained as an annihilator extension of ${\mathcal{E}}/ann\left( {\mathcal{E}%
}\right) $ by $ann\left( {\mathcal{E}}\right) $. So we get ${\mathcal{E}}%
_{\theta }$ as annihilator extensions of different algebras ${\mathcal{E}}$
and ${\mathcal{E}}/ann\left( {\mathcal{E}}\right) $. Our aim is to avoid
construct the same algebra by different algebras.

\begin{definition}
Let $\theta \in {\mathcal{Z}}\left( {\mathcal{E\times E}},V\right) $ then $%
\theta ^{\bot }=\left\{ e_{i}\in {\mathcal{E}}:\theta
(e_{i},e_{i})=0\right\} $ is called the \emph{radical} of $\theta $. If $%
\theta (e_{i},e_{i})=\underset{j=1}{\overset{n-m}{\sum }}\theta _{j}\left(
e_{i},e_{i}\right) e_{m+j}$ where $\theta _{j}\in {\mathcal{Z}}\left( {%
\mathcal{E\times E}},\mathbb{F}\right) $, then 
\begin{equation*}
\theta ^{\bot }=\theta _{1}^{\bot }\cap \theta _{2}^{\bot }\cap \cdots \cap
\theta _{n-m}^{\bot }.
\end{equation*}
\end{definition}

\begin{lemma}
\label{rad}Let $\theta \in {\mathcal{Z}}\left( {\mathcal{E\times E}}%
,V\right) $. Then $ann({\mathcal{E}}_{\theta })=\left( \theta ^{\bot }\cap
ann\left( {\mathcal{E}}\right) \right) \oplus V.$
\end{lemma}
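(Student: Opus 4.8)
The plan is to compute $ann({\mathcal{E}}_{\theta})$ directly from the definition of the annihilator and the multiplication $\star$ on ${\mathcal{E}}_{\theta} = {\mathcal{E}} \oplus V$. Recall that for the natural basis $B_{{\mathcal{E}}_{\theta}} = \{e_1,\ldots,e_n\}$, the multiplication satisfies $e_i \star e_j = e_i e_j\mid_{{\mathcal{E}}} + \theta(e_i,e_j)$ for $1 \leq i,j \leq m$ and vanishes otherwise; in particular $e_i \star e_i = e_i^2 + \theta(e_i,e_i)$ for $1 \leq i \leq m$, while $e_{m+j} \star e_{m+j} = 0$ for the basis vectors of $V$. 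Since $B_{{\mathcal{E}}_{\theta}}$ is a natural basis, the cited Lemma~2.7 of \cite{Elduque} applies and gives $ann({\mathcal{E}}_{\theta}) = \mathrm{span}\{e_i : e_i \star e_i = 0\}$.

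First I would split this spanning set according to whether the index lies in $\{1,\ldots,m\}$ or in $\{m+1,\ldots,n\}$. Every basis vector $e_{m+j}$ of $V$ satisfies $e_{m+j} \star e_{m+j} = 0$, so all of $V$ lies in $ann({\mathcal{E}}_{\theta})$; this accounts for the summand $V$. For the indices $1 \leq i \leq m$, the condition $e_i \star e_i = 0$ reads $e_i^2 + \theta(e_i,e_i) = 0$ in ${\mathcal{E}} \oplus V$, and since $e_i^2 \in {\mathcal{E}}$ and $\theta(e_i,e_i) \in V$ are in complementary summands, this is equivalent to the two simultaneous conditions $e_i^2 = 0$ and $\theta(e_i,e_i) = 0$. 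By Lemma~2.7 again (applied to ${\mathcal{E}}$ itself), $e_i^2 = 0$ means $e_i \in ann({\mathcal{E}})$, and by the definition of the radical, $\theta(e_i,e_i) = 0$ means $e_i \in \theta^{\bot}$. Hence the basis vectors $e_i$ with $1 \leq i \leq m$ contributing to the annihilator are exactly those lying in $\theta^{\bot} \cap ann({\mathcal{E}})$.

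Putting the two pieces together yields $ann({\mathcal{E}}_{\theta}) = \left(\theta^{\bot} \cap ann({\mathcal{E}})\right) \oplus V$, as claimed, with the direct sum being genuine since the two summands involve disjoint blocks of basis vectors. The main point requiring care is the equivalence $e_i^2 + \theta(e_i,e_i) = 0 \iff e_i^2 = 0 \text{ and } \theta(e_i,e_i) = 0$: this is where the direct-sum decomposition ${\mathcal{E}}_{\theta} = {\mathcal{E}} \oplus V$ is used essentially, and one should note that $\theta^{\bot}$ and $ann({\mathcal{E}})$ are both spanned by natural basis vectors of ${\mathcal{E}}$ (for $ann({\mathcal{E}})$ this is Lemma~2.7, and for $\theta^{\bot}$ it follows from the definition, which tests $\theta$ only on the diagonal pairs $(e_i,e_i)$), so that the intersection $\theta^{\bot} \cap ann({\mathcal{E}})$ is again spanned by basis vectors and the spanning-set computation above identifies it cleanly. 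No genuine obstacle is expected; the argument is a direct unwinding of the definitions once Lemma~2.7 is invoked.
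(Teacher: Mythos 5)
Your proposal is correct and follows essentially the same route as the paper's proof: both reduce the computation to which natural basis vectors of ${\mathcal{E}}_{\theta}$ square to zero, use the direct-sum decomposition ${\mathcal{E}}\oplus V$ to split $e_{i}^{2}+\theta(e_{i},e_{i})=0$ into the two conditions $e_{i}^{2}=0$ and $\theta(e_{i},e_{i})=0$, and observe that $V$ always lies in the annihilator. Your write-up is merely more explicit than the paper's in citing Lemma~2.7 of \cite{Elduque} and in noting that all subspaces involved are spanned by basis vectors.
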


\begin{proof}
If $e_{i}\in ann({\mathcal{E}}_{\theta })$ then $e_{i}\star
e_{i}=e_{i}^{2}+\theta \left( e_{i},e_{i}\right) =0$,$\ $hence $%
e_{i}^{2}=\theta \left( e_{i},e_{i}\right) =0$. It follows that $e_{i}\in
\left( \theta ^{\bot }\cap ann\left( {\mathcal{E}}\right) \right) \oplus V$.
On the other hand, note that $V\subset ann({\mathcal{E}}_{\theta })$. If $%
e_{i}\in \theta ^{\bot }\cap ann\left( {\mathcal{E}}\right) \ $then $%
e_{i}\star e_{i}=e_{i}^{2}+\theta \left( e_{i},e_{i}\right) =0$ and hence $%
e_{i}\in ann({\mathcal{E}}_{\theta })$.
\end{proof}

\begin{corollary}
Let $\theta \in {\mathcal{Z}}\left( {\mathcal{E\times E}},V\right) $. Then $%
ann({\mathcal{E}}_{\theta })=V$ if and only if 
\begin{equation*}
\theta ^{\bot }\cap ann\left( {\mathcal{E}}\right) =0.
\end{equation*}
\end{corollary}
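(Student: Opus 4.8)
The plan is to derive this directly from Lemma \ref{rad}, which already computes the annihilator of the extension as an internal direct sum. By that lemma we have the identity
\begin{equation*}
ann({\mathcal{E}}_{\theta })=\left( \theta ^{\bot }\cap ann\left( {\mathcal{E}}\right) \right) \oplus V,
\end{equation*}
so the entire content of the corollary is the observation that in such a direct sum decomposition the total space coincides with the summand $V$ precisely when the complementary summand is trivial.

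First I would record the forward implication. Assuming $ann({\mathcal{E}}_{\theta })=V$, substitute into the displayed identity to get $\left( \theta ^{\bot }\cap ann\left( {\mathcal{E}}\right) \right) \oplus V=V$. Since the sum is direct, comparing dimensions (or simply noting that the first summand intersects $V$ trivially by the definition of a direct sum) forces $\theta ^{\bot }\cap ann\left( {\mathcal{E}}\right) =0$. Concretely, any $x\in \theta ^{\bot }\cap ann\left( {\mathcal{E}}\right) $ lies in $ann({\mathcal{E}}_{\theta })=V$, but it also lies in the first summand, and the two summands meet only in $0$, so $x=0$.

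For the converse I would simply feed $\theta ^{\bot }\cap ann\left( {\mathcal{E}}\right) =0$ back into the identity from Lemma \ref{rad}: the first summand vanishes and the direct sum collapses to $ann({\mathcal{E}}_{\theta })=0\oplus V=V$, which is exactly the desired conclusion.

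Because every nontrivial step has been carried out in Lemma \ref{rad}, there is essentially no obstacle here; the only point requiring a moment's care is that the decomposition is a genuine internal direct sum, so that $V$ may be cancelled and the vanishing of the total annihilator's non-$V$ part is equivalent to the vanishing of $\theta ^{\bot }\cap ann\left( {\mathcal{E}}\right) $ itself. I would therefore keep the argument to the two short implications above rather than reproving any property of the radical.
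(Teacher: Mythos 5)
Your proof is correct and matches the paper's intent exactly: the paper states this corollary without proof as an immediate consequence of Lemma \ref{rad}, and your two implications (using directness of the sum for the forward direction, substitution for the converse) are precisely the routine verification being left to the reader. Nothing is missing.
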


\begin{corollary}
\label{diff}Let$\mathcal{\ }\theta \in {\mathcal{Z}}\left( {\mathcal{E\times
E}},V\right) $ such that $\theta ^{\bot }\cap ann\left( {\mathcal{E}}\right)
=0$. Then ${\mathcal{E}}_{\theta }$ can not be obtained as an annihilator
extension of a different algebra.
\end{corollary}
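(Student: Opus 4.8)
The plan is to reduce the statement to the corollary immediately preceding it together with the canonical decomposition furnished by Lemma~\ref{construct}. The single substantive input is that the hypothesis $\theta^{\bot}\cap ann\left({\mathcal{E}}\right)=0$ is precisely the condition appearing in that corollary, so it delivers $ann\left({\mathcal{E}}_{\theta}\right)=V$. Everything after this point is bookkeeping about which base an annihilator extension records, so I would isolate this equality as the one nontrivial step.

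Next I would make precise, following the motivating discussion that opens the section, what ``a different algebra'' means. Since $VV=0$ forces $V\subseteq ann(\mathcal{E}_{\theta})$ in every case, the canonical realisation of Lemma~\ref{construct} presents $\mathcal{E}_{\theta}$ as an annihilator extension of $\mathcal{E}_{\theta}/ann(\mathcal{E}_{\theta})$; this base is genuinely different from the original $\mathcal{E}$ exactly when the inclusion $V\subseteq ann(\mathcal{E}_{\theta})$ is strict, because then $\dim\bigl(\mathcal{E}_{\theta}/ann(\mathcal{E}_{\theta})\bigr)<\dim(\mathcal{E}_{\theta}/V)=\dim\mathcal{E}$. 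I would also record the identification $\mathcal{E}_{\theta}/V\cong\mathcal{E}$: in the quotient one has $\overline{e_{i}}\star\overline{e_{i}}=\overline{e_{i}^{2}+\theta(e_{i},e_{i})}=\overline{e_{i}^{2}}$ since $\theta(e_{i},e_{i})\in V$, so the map $\overline{e_{i}}\mapsto e_{i}$ is an isomorphism of evolution algebras onto $\mathcal{E}$.

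Combining these, from $ann(\mathcal{E}_{\theta})=V$ I obtain $\mathcal{E}_{\theta}/ann(\mathcal{E}_{\theta})=\mathcal{E}_{\theta}/V\cong\mathcal{E}$, so the canonical base coincides with $\mathcal{E}$ and the strict-inclusion scenario never occurs; hence $\mathcal{E}_{\theta}$ is recovered only from $\mathcal{E}$, not from a different algebra. The step I expect to demand the most care is the interpretation itself rather than any computation: $\mathcal{E}_{\theta}$ is trivially an annihilator extension of $\mathcal{E}_{\theta}/W$ for any proper subspace $W\subsetneq V$, which would yield bases of \emph{larger} dimension, so the claim must be read as concerning extension by the \emph{whole} annihilator $ann(\mathcal{E}_{\theta})$, exactly as in the preceding paragraph. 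Once that reading is fixed, the dimension comparison $\dim ann(\mathcal{E}_{\theta})=\dim V$ closes the argument, and the remaining identifications are routine.
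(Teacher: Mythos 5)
Your proposal is correct and takes essentially the same route as the paper: both arguments rest on the preceding corollary, which converts the hypothesis $\theta ^{\bot }\cap ann\left( {\mathcal{E}}\right) =0$ into $ann({\mathcal{E}}_{\theta })=V$, combined with the identification ${\mathcal{E}}_{\theta }/V\cong {\mathcal{E}}$, so that the base algebra is recovered as the isomorphism invariant ${\mathcal{E}}_{\theta }/ann({\mathcal{E}}_{\theta })$. The only difference is one of explicitness: the paper formalizes ``a different algebra'' by taking an arbitrary competing realization ${\mathcal{M}}_{\vartheta }\cong {\mathcal{E}}_{\theta }$ with $\vartheta ^{\bot }\cap ann({\mathcal{M}})=0$ and chaining ${\mathcal{M}}\cong {\mathcal{M}}_{\vartheta }/ann\left( {\mathcal{M}}_{\vartheta }\right) \cong {\mathcal{E}}_{\theta }/ann\left( {\mathcal{E}}_{\theta }\right) \cong {\mathcal{E}}$, using that an isomorphism carries annihilator onto annihilator, whereas you verify only the ${\mathcal{E}}$-side canonical decomposition and leave this symmetric transfer among your ``routine identifications'' --- it is indeed routine, but it is the step that actually rules out the different algebra ${\mathcal{M}}$.
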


\begin{proof}
Consider $\vartheta \in {\mathcal{Z}}({\mathcal{M\times M}},W)$ such that $%
\vartheta ^{\bot }\cap ann({\mathcal{M}})=0$. If ${\mathcal{M}}_{\vartheta
}\cong {\mathcal{E}}_{\theta }$, then $ann\left( {\mathcal{M}}_{\vartheta
}\right) \cong ann\left( {\mathcal{E}}_{\theta }\right) $. Hence ${\mathcal{M%
}}\cong {\mathcal{M}}_{\vartheta }/ann\left( {\mathcal{M}}_{\vartheta
}\right) \cong {\mathcal{E}}_{\theta }/ann\left( {\mathcal{E}}_{\theta
}\right) \cong {\mathcal{E}}$.
\end{proof}

Thus to avoid constructing the same algebra as an annihilator extension of
different algebras, we restrict $\theta $ to be such that $\theta ^{\bot
}\cap ann\left( {\mathcal{E}}\right) =0$.

\bigskip

Suppose that ${\mathcal{E}}$ is an evolution algebra. Denote $Aut\left( {%
\mathcal{E}}\right) $ for the set of all automorphisms of ${\mathcal{E}}$.
Let $\phi \in Aut\left( {\mathcal{E}}\right) $. For $\theta \in {\mathcal{Z}}%
\left( {\mathcal{E\times E}},V\right) $ defining $\phi \theta \left(
e_{i},e_{i}\right) =\theta \left( \phi \left( e_{i}\right) ,\phi \left(
e_{i}\right) \right) $ then $\phi \theta \in {\mathcal{Z}}\left( {\mathcal{%
E\times E}},V\right) $ if and only if $\theta \left( \phi \left(
e_{i}\right) ,\phi \left( e_{j}\right) \right) =0$ for all $i\neq j$.

Consider the matrix representation and assume that $\theta =\overset{m}{%
\underset{i=1}{\sum }}c_{ii}\delta _{e_{i},e_{i}}\in {\mathcal{Z}}\left( {%
\mathcal{E\times E}},\mathbb{F}\right) $. Then $\theta $ can be represented
by an $m\times m$ diagonal matrix $\big(c_{ii}\big)$, and then $\phi \theta
=\phi ^{t}\big(c_{ii}\big)\phi $. Hence $\phi \theta \in {\mathcal{Z}}\left( 
{\mathcal{E\times E}},V\right) $ if and only if $\phi \theta $\ be a
diagonal matrix.

\bigskip

Now, suppose that ${\mathcal{E}}$ be an evolution algebra with a natural
basis $B_{{\mathcal{E}}}=\left\{ e_{1},\ldots ,e_{m}\right\} $ and $V$ be a
vector space with a basis $B_{V}=\left\{ e_{m+1},\ldots ,e_{n}\right\} $.
Let $\theta (e_{l},e_{l})=\underset{i=1}{\overset{n-m}{\sum }}\theta
_{i}(e_{l},e_{l})e_{m+i},\vartheta (e_{l},e_{l})=\underset{i=1}{\overset{n-m}%
{\sum }}\vartheta _{i}(e_{l},e_{l})e_{m+i}\in {\mathcal{H}}\left( {\mathcal{%
E\times E}},V\right) $ and $\theta ^{\bot }\cap ann\left( {\mathcal{E}}%
\right) =\vartheta ^{\bot }\cap ann\left( {\mathcal{E}}\right) =0$, i.e., $%
ann({\mathcal{E}}_{\theta })=ann({\mathcal{E}}_{\vartheta })=V$. Assume that 
${\mathcal{E}}_{\theta }$ and ${\mathcal{E}}_{\vartheta }$ are isomorphic,
let $\phi :{\mathcal{E}}_{\theta }\longrightarrow {\mathcal{E}}_{\vartheta }$
be an isomorphism then $\phi \left( ann({\mathcal{E}}_{\theta })\right) =ann(%
{\mathcal{E}}_{\vartheta })$, i.e., $\psi =\phi \mid _{V}\in Gl\left(
V\right) $. If $\phi \left( e_{i}\right) =y_{i}+v_{i}$ where $y_{i}\in {%
\mathcal{E}},v_{i}$ $\in V$, then $\phi $ induces an isomorphism $\phi _{0}:{%
\mathcal{E}}\longrightarrow {\mathcal{E}}$ defined by $\phi _{0}\left(
e_{i}\right) =y_{i}$ and a linear map $f:{\mathcal{E}}\longrightarrow V$
defined by $f\left( e_{i}\right) =v_{i}$. So, we can realize $\phi $ as a
matrix of the form:%
\begin{equation*}
\phi =%
\begin{bmatrix}
\phi _{0} & 0 \\ 
f & \psi%
\end{bmatrix}%
;\phi _{0}\in Aut\left( {\mathcal{E}}\right) ,f\in Hom\left( {\mathcal{E}}%
,V\right) \mbox{ and }\psi =\phi \mid _{V}\in Gl\left( V\right) .
\end{equation*}%
Now, let $x=\underset{i=1}{\overset{n}{\sum }}a_{i}e_{i}\in {\mathcal{E}}%
_{\theta }={\mathcal{E}}\oplus V$ then%
\begin{equation*}
\phi \left( \underset{i=1}{\overset{n}{\sum }}a_{i}e_{i}\right) =\phi
_{0}\left( \underset{i=1}{\overset{m}{\sum }}a_{i}e_{i}\right) +f\left( 
\underset{i=1}{\overset{m}{\sum }}a_{i}e_{i}\right) +\psi \left( \underset{%
i=m+1}{\overset{n}{\sum }}a_{i}e_{i}\right) .
\end{equation*}%
According to the definition of $\phi $ we have 
\begin{equation*}
\phi \left( \underset{i=1}{\overset{n}{\sum }}a_{i}e_{i}\star _{{\mathcal{E}}%
_{\theta }}\underset{i=1}{\overset{n}{\sum }}a_{i}e_{i}\right) =\phi \left( 
\underset{i=1}{\overset{n}{\sum }}a_{i}e_{i}\right) \star _{{\mathcal{E}}%
_{\vartheta }}\phi \left( \underset{i=1}{\overset{n}{\sum }}%
a_{i}e_{i}\right) .
\end{equation*}

\begin{eqnarray*}
L.H.S. &=&\phi \left( \underset{i=1}{\overset{m}{\sum }}a_{i}^{2}\left[
e_{i}^{2}+\theta (e_{i},e_{i})\right] \right)  \\
&=&\overset{m}{\underset{i=1}{\sum }}a_{i}^{2}\left[ \phi _{0}\left(
e_{i}^{2}\right) +f\left( e_{i}^{2}\right) +\psi \left( \theta
(e_{i},e_{i})\right) \right]  \\
&=&\overset{m}{\underset{i=1}{\sum }}a_{i}^{2}\left[ \phi _{0}\left(
e_{i}^{2}\right) +\left( \theta _{f}+\psi \circ \theta \right) (e_{i},e_{i})%
\right] .
\end{eqnarray*}%
\begin{eqnarray*}
R.H.S. &=&\left[ \phi _{0}\left( \underset{i=1}{\overset{m}{\sum }}%
a_{i}e_{i}\right) \right] ^{2}+\vartheta \left( \phi _{0}\left( \underset{i=1%
}{\overset{m}{\sum }}a_{i}e_{i}\right) ,\phi _{0}\left( \underset{i=1}{%
\overset{m}{\sum }}a_{i}e_{i}\right) \right)  \\
&=&\overset{m}{\underset{i=1}{\sum }}a_{i}^{2}\left[ \phi _{0}\left(
e_{i}^{2}\right) +\phi _{0}\vartheta (e_{i},e_{i})\right] .
\end{eqnarray*}%
This yields to: 
\begin{equation}
\phi _{0}\vartheta (e_{i},e_{i})=\left( \theta _{f}+\psi \circ \theta
\right) (e_{i},e_{i})\mbox{ for }i=1,2,...,m.  \label{auto}
\end{equation}%
i.e., $\phi _{0}\upsilon =\theta _{f}+\psi \circ \theta \in {\mathcal{Z}}%
\left( {\mathcal{E\times E}},V\right) $. Thus ${\mathcal{E}}_{\theta }$ and $%
{\mathcal{E}}_{\vartheta }$ are isomorphic if and only if there exist $\phi
_{0}\in Aut\left( {\mathcal{E}}\right) $ and $\psi \in Gl\left( V\right) $
such that $\phi _{0}\upsilon =\psi \circ \theta $\ modulo ${\mathcal{B}}%
\left( {\mathcal{E\times E}},V\right) $.

Note that for all $i\neq j\mathcal{\ }$we have 
\begin{equation*}
0=\phi \left( e_{i}\star _{{\mathcal{E}}_{\theta }}e_{j}\right) =\phi \left(
e_{i}\right) \star _{{\mathcal{E}}_{\vartheta }}\phi \left( e_{j}\right)
=\vartheta \left( \phi _{0}\left( e_{i}\right) ,\phi _{0}\left( e_{j}\right)
\right) .
\end{equation*}%
i.e., $\phi _{0}\upsilon \in {\mathcal{H}}\left( {\mathcal{E\times E}}%
,V\right) .$

Now, for $1\leq i\leq n-m$, if $\psi \left( e_{m+i}\right) =\overset{n-m}{%
\underset{j=1}{\sum }}a_{ij}e_{m+j}$, then 
\begin{equation*}
\underset{i=1}{\overset{n-m}{\sum }}\phi _{0}\vartheta
_{i}(e_{l},e_{l})e_{m+i}=\underset{i=1}{\overset{n-m}{\sum }}\theta
_{i}(e_{l},e_{l})\psi \left( e_{m+i}\right) =\overset{n-m}{\underset{j=1}{%
\sum }}\overset{n-m}{\underset{i=1}{\sum }}a_{ij}\theta
_{i}(e_{l},e_{l})e_{m+j}.
\end{equation*}%
Hence $\phi _{0}\vartheta _{j}=\underset{i=1}{\overset{n-m}{\sum }}%
a_{ij}\theta _{i}$ modulo ${\mathcal{B}}\left( {\mathcal{E\times E}}%
,V\right) $. It follows that the $\phi _{0}\vartheta _{i}$ span the same
subspace of ${\mathcal{H}}\left( {\mathcal{E\times E}},V\right) $ as the $%
\theta _{i}$. We have proved

\begin{lemma}
\label{iso}Let $\theta (e_{l},e_{l})=\underset{i=1}{\overset{n-m}{\sum }}%
\theta _{i}(e_{l},e_{l})e_{m+i}$ and $\vartheta (e_{l},e_{l})=\underset{i=1}{%
\overset{n-m}{\sum }}\vartheta _{i}(e_{l},e_{l})e_{m+i}$ be two elements of $%
{\mathcal{H}}\left( {\mathcal{E\times E}},V\right) $. Suppose that $\theta
^{\perp }\cap ann\left( {\mathcal{E}}\right) =\vartheta ^{\perp }\cap
ann\left( {\mathcal{E}}\right) =0$. Then ${\mathcal{E}}_{\theta }\cong {%
\mathcal{E}}_{\vartheta }$ if and only if there is a $\phi \in Aut\left( {%
\mathcal{E}}\right) $ such that the $\phi \vartheta _{i}$ span the same
subspace of ${\mathcal{H}}\left( {\mathcal{E\times E}},\mathbb{F}\right) $
as the $\theta _{i}$.
\end{lemma}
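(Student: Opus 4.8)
The plan is to prove both implications of the equivalence, exploiting the hypothesis $\theta^{\perp}\cap ann(\mathcal{E})=\vartheta^{\perp}\cap ann(\mathcal{E})=0$, which by Lemma \ref{rad} says precisely that $ann(\mathcal{E}_{\theta})=ann(\mathcal{E}_{\vartheta})=V$. This is the structural fact that drives everything: since any algebra isomorphism carries the annihilator onto the annihilator, an isomorphism $\phi\colon\mathcal{E}_{\theta}\to\mathcal{E}_{\vartheta}$ must satisfy $\phi(V)=V$, so $\psi:=\phi\mid_{V}\in Gl(V)$ and $\phi$ descends to the quotients. Because $\mathcal{E}_{\theta}/V\cong\mathcal{E}\cong\mathcal{E}_{\vartheta}/V$ (the products $\bar e_i\bar e_i=\overline{e_i^2}$ are inherited from $\mathcal{E}$, as $\theta(e_i,e_i),\vartheta(e_i,e_i)\in V$), this induced map is an automorphism $\phi_0\in Aut(\mathcal{E})$. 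Writing $\phi(e_i)=y_i+v_i$ with $y_i\in\mathcal{E}$, $v_i\in V$ and setting $\phi_0(e_i)=y_i$, $f(e_i)=v_i$, one obtains the block form $\phi=\left[\begin{smallmatrix}\phi_0&0\\ f&\psi\end{smallmatrix}\right]$ with $f\in Hom(\mathcal{E},V)$.

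For the forward direction I would then impose multiplicativity of $\phi$. Applying $\phi$ to $x\star_{\mathcal{E}_{\theta}}x$ for $x=\sum a_ie_i$ and comparing with $\phi(x)\star_{\mathcal{E}_{\vartheta}}\phi(x)$, then collecting the $\mathcal{E}$- and $V$-components and using $f(e_i^2)=\theta_f(e_i,e_i)$, yields exactly equation (\ref{auto}), namely $\phi_0\vartheta=\theta_f+\psi\circ\theta$ in $\mathcal{Z}(\mathcal{E}\times\mathcal{E},V)$; that is, $\phi_0\vartheta\equiv\psi\circ\theta$ modulo $\mathcal{B}(\mathcal{E}\times\mathcal{E},V)$. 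The vanishing of the off-diagonal products $\phi(e_i\star e_j)=\vartheta(\phi_0 e_i,\phi_0 e_j)=0$ for $i\neq j$ certifies that $\phi_0\vartheta$ again lies in $\mathcal{Z}$, hence in $\mathcal{H}$. Finally I would read off the scalar components: writing $\psi(e_{m+i})=\sum_j a_{ij}e_{m+j}$ and matching coefficients of $e_{m+j}$ gives $\phi_0\vartheta_j=\sum_i a_{ij}\theta_i$ modulo $\mathcal{B}$. Since $\psi\in Gl(V)$ the matrix $(a_{ij})$ is invertible, so the family $\{\phi_0\vartheta_j\}$ and the family $\{\theta_i\}$ span the same subspace of $\mathcal{H}(\mathcal{E}\times\mathcal{E},\mathbb{F})$; taking the automorphism in the statement to be $\phi_0$ gives the asserted condition.

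For the converse I would reverse this chain. Given the automorphism named $\phi$ in the statement, which I rename $\phi_0\in Aut(\mathcal{E})$, with $\langle\phi_0\vartheta_i\rangle=\langle\theta_i\rangle$ in $\mathcal{H}$, choose an invertible scalar matrix $(a_{ij})$ realizing $\phi_0\vartheta_j=\sum_i a_{ij}\theta_i$ modulo $\mathcal{B}$; this defines $\psi\in Gl(V)$ by $\psi(e_{m+i})=\sum_j a_{ij}e_{m+j}$, so that $\phi_0\vartheta\equiv\psi\circ\theta$ modulo $\mathcal{B}$. The discrepancy $\phi_0\vartheta-\psi\circ\theta\in\mathcal{B}(\mathcal{E}\times\mathcal{E},V)$ equals $\theta_f$ for some $f\in Hom(\mathcal{E},V)$, and assembling $\phi_0$, $f$, $\psi$ into the block matrix produces a linear bijection $\phi\colon\mathcal{E}_{\theta}\to\mathcal{E}_{\vartheta}$ which, by the computation of equation (\ref{auto}) run backwards, is an algebra isomorphism. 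The main obstacle I anticipate is bookkeeping the reduction modulo $\mathcal{B}(\mathcal{E}\times\mathcal{E},V)$ cleanly: one must be sure that the equality of spans lifts to a genuine relation $\phi_0\vartheta=\theta_f+\psi\circ\theta$ with an invertible $\psi$ and an honest coboundary $\theta_f$, and that these remain compatible with the constraint that everything stays inside $\mathcal{Z}$ rather than becoming a general bilinear map. Here Lemma \ref{equivalent cocycles} is exactly what guarantees that adjusting by $\theta_f$ does not change the isomorphism type, so the passage to $\mathcal{H}$ is legitimate.
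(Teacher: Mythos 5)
Your proposal is correct and takes essentially the same route as the paper: the paper's own proof is exactly the discussion preceding the lemma, where preservation of the annihilator (via Lemma \ref{rad}) yields the block form $\phi =\begin{bmatrix} \phi _{0} & 0 \\ f & \psi \end{bmatrix}$, multiplicativity yields Equation (\ref{auto}), and writing $\psi$ as an invertible matrix $\big(a_{ij}\big)$ turns $\phi _{0}\vartheta \equiv \psi \circ \theta$ modulo ${\mathcal{B}}\left( {\mathcal{E\times E}},V\right)$ into the equality of spans in ${\mathcal{H}}\left( {\mathcal{E\times E}},\mathbb{F}\right)$. Your write-up is in fact slightly more careful than the paper's on the converse direction (the paper asserts it implicitly in its ``if and only if'' after Equation (\ref{auto})), the only residual point being that the vanishing of $\vartheta \left( \phi _{0}\left( e_{i}\right) ,\phi _{0}\left( e_{j}\right) \right)$ for $i\neq j$ needed to reverse the computation is guaranteed not by Lemma \ref{equivalent cocycles} but by the implicit requirement that the $\phi _{0}\vartheta _{i}$ lie in ${\mathcal{Z}}\left( {\mathcal{E\times E}},\mathbb{F}\right)$, without which the span condition would not even be well-posed.
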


In case of $\theta =\vartheta $, we obtain from Equation $\left( \ref{auto}%
\right) $ the following description of $Aut({\mathcal{E}}_{\theta })$.

\begin{lemma}
Let ${\mathcal{E}}$ be a nilpotent evolution algebra and $\theta \in {%
\mathcal{H}}\left( {\mathcal{E\times E}},V\right) $ such that $\theta ^{\bot
}\cap ann\left( {\mathcal{E}}\right) =0$. Then $Aut\left( {\mathcal{E}}%
_{\theta }\right) $ consists of all linear operators of the matrix form 
\begin{equation*}
\phi =%
\begin{bmatrix}
\phi _{0} & 0 \\ 
f & \psi%
\end{bmatrix}%
;\phi _{0}\in Aut\left( {\mathcal{E}}\right) ,f\in Hom({\mathcal{E}},V)\text{
and }\psi =\phi \mid _{V}\in Gl\left( V\right) .
\end{equation*}%
such that $\phi _{0}\theta (e_{i},e_{i})=f\left( e_{i}^{2}\right) +\psi
\left( \theta (e_{i},e_{i})\right) \mathcal{\ }$for $i=1,2,...,m$.
\end{lemma}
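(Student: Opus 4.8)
The plan is to recognize that this statement is the diagonal case $\vartheta=\theta$ of the isomorphism analysis already carried out leading up to Equation $\left(\ref{auto}\right)$, so most of the computation is in hand; what remains is to package it as a characterization of $Aut(\mathcal{E}_\theta)$ and to verify the converse direction. First I would record that the hypothesis $\theta^\bot\cap ann(\mathcal{E})=0$ makes Lemma \ref{rad} collapse to $ann(\mathcal{E}_\theta)=V$. Since every automorphism of an algebra preserves its annihilator, each $\phi\in Aut(\mathcal{E}_\theta)$ satisfies $\phi(V)=V$; hence $\psi:=\phi\mid_V\in Gl(V)$, and writing $\phi(e_i)=\phi_0(e_i)+f(e_i)$ with $\phi_0(e_i)\in\mathcal{E}$ and $f(e_i)\in V$ for $1\le i\le m$, the map $\phi$ is forced into the block-lower-triangular shape $\phi=\begin{bmatrix}\phi_0 & 0 \\ f & \psi\end{bmatrix}$, where $\phi_0$ and $f$ are the $\mathcal{E}$- and $V$-components of $\phi$ on $\mathcal{E}=\mathrm{span}\{e_1,\dots,e_m\}$. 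The vanishing upper-right block is exactly the condition $\phi(V)\subseteq V$.

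The key step is to show $\phi_0\in Aut(\mathcal{E})$ and to extract the compatibility equation. Because $\phi$ preserves $V=ann(\mathcal{E}_\theta)$, it descends to a linear automorphism of the quotient $\mathcal{E}_\theta/V\cong\mathcal{E}$, and under this identification the induced map is precisely $\phi_0$; this gives both that $\phi_0$ is multiplicative and that it is invertible, so $\phi_0\in Aut(\mathcal{E})$. To obtain the stated equation I would evaluate $\phi(e_i\star_{\mathcal{E}_\theta}e_i)=\phi(e_i)\star_{\mathcal{E}_\theta}\phi(e_i)$ for each $1\le i\le m$, exactly as in the computation preceding $\left(\ref{auto}\right)$ but with $\vartheta$ replaced by $\theta$. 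Splitting both sides into their $\mathcal{E}$- and $V$-components, the $\mathcal{E}$-component reproduces $\phi_0(e_i^2)=\phi_0(e_i)^2$ (reconfirming $\phi_0\in Aut(\mathcal{E})$), while the $V$-component yields
\[
\phi_0\theta(e_i,e_i)=f(e_i^2)+\psi\left(\theta(e_i,e_i)\right),
\]
upon recalling $\theta_f(e_i,e_i)=f(e_i^2)$. Using $e_i\star_{\mathcal{E}_\theta}e_j=0$ for $i\ne j$, one checks that no cross terms survive, so the whole computation stays inside $\mathcal{Z}(\mathcal{E}\times\mathcal{E},V)$.

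Finally I would verify the converse: given $\phi_0\in Aut(\mathcal{E})$, $f\in Hom(\mathcal{E},V)$ and $\psi\in Gl(V)$ satisfying the displayed equation, the operator $\phi=\begin{bmatrix}\phi_0 & 0 \\ f & \psi\end{bmatrix}$ is an automorphism. Bijectivity is automatic, since $\phi$ is block-lower-triangular with invertible diagonal blocks $\phi_0$ and $\psi$. Multiplicativity is the same component computation read backwards: the $\mathcal{E}$-part holds because $\phi_0\in Aut(\mathcal{E})$, the $V$-part is precisely the assumed equation, and the products $e_i\star e_j$ with $i\ne j$ vanish on both sides. I expect the only genuine subtlety to be the opening structural step, namely arguing that preservation of $V=ann(\mathcal{E}_\theta)$ forces the lower-triangular block form and induces an honest $\phi_0\in Aut(\mathcal{E})$; once that is in place the characterization is just the $\theta=\vartheta$ specialization of $\left(\ref{auto}\right)$ together with the routine converse check.
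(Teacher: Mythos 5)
Your overall route is the paper's own: the paper offers no separate proof of this lemma, simply observing that it is the case $\vartheta =\theta $ of the computation that produced Equation $\left( \ref{auto}\right) $, and your forward direction (automorphisms preserve $ann\left( {\mathcal{E}}_{\theta }\right) =V$, hence have the block-lower-triangular form, descend to $\phi _{0}\in Aut\left( {\mathcal{E}}\right) $ on ${\mathcal{E}}_{\theta }/V\cong {\mathcal{E}}$, and the $V$-component of $\phi \left( e_{i}\star e_{i}\right) =\phi \left( e_{i}\right) \star \phi \left( e_{i}\right) $ gives the displayed equation) is correct and fills in what the paper leaves implicit.

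The gap is in your converse. You assert that for $i\neq j$ ``the products $e_{i}\star e_{j}$ vanish on both sides,'' but only the left side vanishes automatically: the right side is
\begin{equation*}
\phi \left( e_{i}\right) \star _{{\mathcal{E}}_{\theta }}\phi \left(
e_{j}\right) =\phi _{0}\left( e_{i}\right) \phi _{0}\left( e_{j}\right)
+\theta \left( \phi _{0}\left( e_{i}\right) ,\phi _{0}\left( e_{j}\right)
\right) =\theta \left( \phi _{0}\left( e_{i}\right) ,\phi _{0}\left(
e_{j}\right) \right) ,
\end{equation*}
and the diagonal equation does not force this off-diagonal term to vanish. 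Concretely, take ${\mathcal{E}}={\mathcal{E}}_{2,1}$ over $\mathbb{R}$, $V=\mathbb{R}e_{3}$, $\theta =\delta _{e_{1},e_{1}}+\delta _{e_{2},e_{2}}$ (so ${\mathcal{E}}_{\theta }={\mathcal{E}}_{3,3}^{\alpha =1}$ and $\theta ^{\bot }\cap ann\left( {\mathcal{E}}\right) =0$), $f=0$, $\psi =\mathrm{id}_{V}$, and $\phi _{0}\left( e_{1}\right) =e_{1}$, $\phi _{0}\left( e_{2}\right) =\frac{3}{5}e_{1}+\frac{4}{5}e_{2}$. Then $\phi _{0}\in Aut\left( {\mathcal{E}}_{2,1}\right) =GL\left( {\mathcal{E}}_{2,1}\right) $ and $\theta \left( \phi _{0}\left( e_{i}\right) ,\phi _{0}\left( e_{i}\right) \right) =e_{3}=f\left( e_{i}^{2}\right) +\psi \left( \theta \left( e_{i},e_{i}\right) \right) $ for $i=1,2$, so all of your hypotheses hold; yet $\phi \left( e_{1}\right) \star \phi \left( e_{2}\right) =\frac{3}{5}e_{3}\neq 0=\phi \left( e_{1}\star e_{2}\right) $, so $\phi $ is not an automorphism. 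What is missing is the off-diagonal requirement $\theta \left( \phi _{0}\left( e_{i}\right) ,\phi _{0}\left( e_{j}\right) \right) =0$ for all $i\neq j$, i.e. $\phi _{0}\in \mathcal{S}_{\theta }\left( {\mathcal{E}}\right) $, equivalently $\phi _{0}\theta \in {\mathcal{Z}}\left( {\mathcal{E\times E}},V\right) $; this is exactly the constraint that reappears in the paper's automorphism table for ${\mathcal{E}}_{3,3}^{\alpha }$ as $a_{11}a_{12}+\alpha a_{21}a_{22}=0$. In fairness, the lemma as printed omits this condition too, and the paper never verifies the converse inclusion at all (it only asserts the necessary direction via Equation $\left( \ref{auto}\right) $); so your proof of the necessary direction stands, but the ``consists of all'' claim is false as literally stated, and an honest proof of it must either add the off-diagonal condition or flag the statement as needing it.
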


\bigskip

Let ${\mathcal{E}}$ be an evolution algebra with a natural basis $B_{{%
\mathcal{E}}}=\left\{ e_{1},\ldots ,e_{m}\right\} $. If ${\mathcal{E}}%
=\left\langle e_{1},..,e_{r-1},e_{r+1},..,e_{m}\right\rangle \oplus \mathbb{F%
}e_{r}$ is the direct sum of two nilpotent ideals, the $1$-dimensional ideal 
$\mathbb{F}e_{r}$ is called an \emph{annihilator component} of ${\mathcal{E}}
$. Any $m$-dimensional nilpotent evolution algebra with annihilator component%
\emph{\ }is a $1$-dimensional trivial extension of $\left( m-1\right) $%
-dimensional nilpotent evolution algebra. Our aim is to avoid construct
nilpotent algebras with annihilator components, and hence when we construct $%
\left( n+1\right) $-dimensional nilpotent algebras, we have to add algebras $%
{\mathcal{E}}=I\oplus \mathbb{F}e_{n+1}$ where ${\mathcal{I}}$ be a
nilpotent algebra with a natural basis $B_{{\mathcal{I}}}=\left\{
e_{1},\ldots ,e_{n}\right\} $.

\begin{lemma}
Let $\theta (e_{l},e_{l})=\underset{i=1}{\overset{n-m}{\sum }}\theta
_{i}(e_{l},e_{l})e_{m+i}$ and $\theta ^{\bot }\cap ann\left( {\mathcal{E}}%
\right) =0$. Then ${\mathcal{E}}_{\theta }$ has an annihilator component if
and only if$\mathcal{\ }\theta _{1},\theta _{2},\ldots ,\theta _{s}$ are
linearly dependent in ${\mathcal{H}}\left( {\mathcal{E\times E}},\mathbb{F}%
\right) $.
\end{lemma}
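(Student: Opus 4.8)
The plan is to prove both implications by matching an annihilator component of $\mathcal{E}_\theta$ against a coboundary relation among the component forms $\theta_1,\ldots,\theta_{n-m}$. First I record a reduction used in both directions: by Lemma \ref{rad} together with the hypothesis $\theta^\bot\cap ann(\mathcal{E})=0$ we have $ann(\mathcal{E}_\theta)=V$. If $\mathbb{F}z$ is any $1$-dimensional ideal that is a direct summand, then $z$ squares to zero and $z\star\mathcal{E}_\theta\subseteq\mathbb{F}z\cap\mathcal{I}=0$ for the complementary ideal $\mathcal{I}$, so $z\in ann(\mathcal{E}_\theta)=V$; thus any annihilator component of $\mathcal{E}_\theta$ lies in $V$. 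Writing the products as $e_i\star e_i=e_i^2|_{\mathcal{E}}+\theta(e_i,e_i)$, the $\theta_k$ are exactly the coordinates of the $V$-part of these squares, which is the link to $\mathcal{E}_\theta^{\langle 2\rangle}$.

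For the implication ``dependent $\Rightarrow$ annihilator component'', suppose $\sum_{k}c_k\theta_k\in\mathcal{B}(\mathcal{E}\times\mathcal{E},\mathbb{F})$ with $(c_k)\ne 0$. For any $\psi\in Gl(V)$ the map $\mathrm{id}_{\mathcal{E}}\oplus\psi$ is an isomorphism $\mathcal{E}_\theta\cong\mathcal{E}_{\psi\circ\theta}$, and under it the component forms transform into invertible linear combinations of the $\theta_k$; since $(c_k)\ne 0$ I may choose $\psi$ so that the new $(n-m)$-th component equals $\sum_k c_k\theta_k$. Replacing $\theta$ by $\psi\circ\theta$, I may therefore assume outright that $\theta_{n-m}=\theta_f$ is a coboundary, $f\in Hom(\mathcal{E},\mathbb{F})$. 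Now put $g\in Hom(\mathcal{E},V)$, $g(x)=-f(x)e_n$; then $\theta_g(e_i,e_i)=-f(e_i^2)e_n=-\theta_{n-m}(e_i,e_i)e_n$, so in $\theta+\theta_g$ the $e_n$-component is annihilated while all others are unchanged. By Lemma \ref{equivalent cocycles}, $\mathcal{E}_\theta\cong\mathcal{E}_{\theta+\theta_g}$, and in the latter $e_n$ occurs in no product $e_i\star e_i$; hence $\langle e_1,\ldots,e_{n-1}\rangle$ is an ideal and $\mathbb{F}e_n$ is an annihilator component. Since having an annihilator component is an isomorphism invariant, $\mathcal{E}_\theta$ has one.

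For the converse, suppose $\mathbb{F}z$ is an annihilator component, so $z\in V$ and $\mathcal{E}_\theta=\mathcal{I}\oplus\mathbb{F}z$ with $\mathcal{I}$ an ideal. Because $\mathbb{F}z\subseteq ann(\mathcal{E}_\theta)$ all cross products vanish, giving $\mathcal{E}_\theta^{\langle 2\rangle}=\mathcal{I}^{\langle 2\rangle}\subseteq\mathcal{I}$. Let $\mu:\mathcal{E}_\theta\to\mathbb{F}$ be the coordinate functional of this decomposition, so $\mu|_{\mathcal{I}}=0$ and $\mu(z)=1$, and split it as $\mu=\mu_{\mathcal{E}}+\mu_V$ with $\mu_{\mathcal{E}}\in\mathcal{E}^{*}$, $\mu_V\in V^{*}$. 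Applying $\mu$ to $e_i\star e_i=e_i^2|_{\mathcal{E}}+\theta(e_i,e_i)\in\mathcal{E}_\theta^{\langle 2\rangle}$ yields $\mu_{\mathcal{E}}(e_i^2)+\sum_k \mu_V(e_{m+k})\,\theta_k(e_i,e_i)=0$ for every $i$. The first term is precisely the coboundary $\theta_{\mu_{\mathcal{E}}}(e_i,e_i)$, so setting $c_k:=\mu_V(e_{m+k})$ this says $\sum_k c_k\theta_k=-\theta_{\mu_{\mathcal{E}}}\in\mathcal{B}(\mathcal{E}\times\mathcal{E},\mathbb{F})$. Finally $(c_k)\ne 0$: otherwise $\mu_V=0$, forcing $\mu(z)=\mu_V(z)=0$ since $z\in V$, a contradiction. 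Hence the $\theta_k$ are linearly dependent in $\mathcal{H}(\mathcal{E}\times\mathcal{E},\mathbb{F})$.

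The heart of the argument, and the step I expect to be the main obstacle, is the converse: recognizing that the $\mathcal{E}$-part $\mu_{\mathcal{E}}$ of the splitting functional contributes exactly a coboundary $\theta_{\mu_{\mathcal{E}}}$, which is what upgrades the ideal-complement condition $\mu(\mathcal{E}_\theta^{\langle 2\rangle})=0$ from a relation in $\mathcal{Z}$ to a genuine dependence in the quotient $\mathcal{H}$. The first implication is more routine, the only care being to straighten the dependence by a single $\psi\in Gl(V)$ and then physically remove it by a single coboundary, checking that only the intended $e_n$-component is affected. The nondegeneracy hypothesis $\theta^\bot\cap ann(\mathcal{E})=0$ enters exactly once, to guarantee $ann(\mathcal{E}_\theta)=V$ and hence $z\in V$, which is what makes $\mu_V\ne 0$ and keeps the dependence relation nontrivial.
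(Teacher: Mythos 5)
Your proof is correct, but it is not the paper's argument, and in one direction it is substantially more careful than the paper's. For the implication ``dependent $\Rightarrow$ annihilator component'' the two arguments are morally the same --- make the dependence visible in a single coordinate of $V$ and split that coordinate off --- but the paper does it in one stroke by citing Lemma \ref{iso}: it picks $\vartheta_{1},\ldots ,\vartheta _{s}$, $s<n-m$, spanning the same subspace of ${\mathcal{H}}\left( {\mathcal{E\times E}},\mathbb{F}\right) $ as the $\theta _{i}$, and concludes ${\mathcal{E}}_{\theta }\cong {\mathcal{E}}_{\vartheta }$, where the unused basis vectors of $V$ are visibly annihilator components; you instead re-derive the needed special case of that lemma by hand, via the explicit isomorphism $\mathrm{id}_{{\mathcal{E}}}\oplus \psi $ for $\psi \in Gl\left( V\right) $ followed by Lemma \ref{equivalent cocycles}. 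That costs a few extra lines but keeps the argument self-contained. The real divergence is in the converse. The paper writes ${\mathcal{E}}_{\theta }=I\oplus \mathbb{F}e_{m+r}$ and concludes directly that $\theta $ takes values in $\left\langle e_{m+j}:j\neq r\right\rangle $, i.e. $\theta _{r}=0$; this tacitly assumes both that the annihilator component sits inside $V$ and that the complementary ideal $I$ is the span of the remaining basis vectors. In general $I$ is a skew hyperplane, and then one does not get $\theta _{r}=0$ on the nose, but only a relation modulo coboundaries --- which is exactly what the lemma asserts (dependence in ${\mathcal{H}}$, not in ${\mathcal{Z}}$). Your argument supplies both missing pieces: the reduction via Lemma \ref{rad} showing that any annihilator component lies in $V$ (the only place the hypothesis $\theta ^{\bot }\cap ann\left( {\mathcal{E}}\right) =0$ is used), and the coordinate functional $\mu =\mu _{{\mathcal{E}}}+\mu _{V}$ of the decomposition, whose ${\mathcal{E}}$-part contributes precisely the coboundary $\theta _{\mu _{{\mathcal{E}}}}$ and whose $V$-part supplies nontrivial coefficients $c_{k}$. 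So your proposal is not merely a variant: in the direction ``component $\Rightarrow $ dependent'' it closes a genuine gap in the paper's own proof, at the price of a somewhat longer argument in the other direction where the paper's appeal to Lemma \ref{iso} is quicker.
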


\begin{proof}
Suppose that ${\mathcal{E}}_{\theta }$ has an annihilator component. Then we
can write ${\mathcal{E}}_{\theta }=I\oplus \mathbb{F}e_{m+r}$ where $%
e_{m+r}\in V$. Then 
\begin{equation*}
\theta :{\mathcal{E}}\times {\mathcal{E}}\longrightarrow \left\langle
e_{m+1},\ldots ,e_{m+r-1},e_{m+r+1},\ldots ,e_{n-m}\right\rangle ,
\end{equation*}%
and hence $\theta _{r}=0$. Then $\theta _{1},\theta _{2},\ldots ,\theta
_{n-m}$ are linearly dependent. On the other hand, suppose that $\theta
_{1},\theta _{2},\ldots ,\theta _{n-m}$ are linearly dependent, let $%
\vartheta (e_{l},e_{l})=\overset{s}{\underset{i=1}{\sum }}\vartheta
_{i}(e_{l},e_{l})e_{m+i}$ where $\left\langle \vartheta _{i}:i=1,2,\ldots
,s\right\rangle $ is a proper subspace of $\left\langle \theta
_{i}:i=1,2,\ldots ,n-m\right\rangle $ and span the same space as $\theta
_{i} $. Then by Lemma \ref{iso}, we have ${\mathcal{E}}_{\theta }\cong {%
\mathcal{E}}_{\vartheta }$. It follows that ${\mathcal{E}}_{\theta }$ has an
annihilator component.
\end{proof}

\bigskip

Suppose that ${\mathcal{E}}$ is an evolution algebra. For $\theta \in {%
\mathcal{H}}\left( {\mathcal{E\times E}},\mathbb{F}\right) $, define a
subgroup $\mathcal{S}_{\theta }\left( {\mathcal{E}}\right) $ of $Aut\left( {%
\mathcal{E}}\right) $ by 
\begin{equation*}
\mathcal{S}_{\theta }\left( {\mathcal{E}}\right) =\left\{ \phi \in Aut\left( 
{\mathcal{E}}\right) :\theta \left( \phi \left( e_{i}\right) ,\phi \left(
e_{j}\right) \right) =0\text{ \ }\forall i\neq j\right\} .
\end{equation*}

Now, we have a procedure takes a nilpotent evolution algebra ${\mathcal{E}}$%
\ of dimension $m$ with a natural basis $B_{{\mathcal{E}}}=\left\{
e_{1},\ldots ,e_{m}\right\} $ gives us all nilpotent evolution algebras ${%
\mathcal{\tilde{E}}}$ of dimension $n$ with a natural basis $B_{{\mathcal{%
\tilde{E}}}}=\left\{ e_{1},\ldots ,e_{m},e_{m+1},\ldots ,e_{n}\right\} $
such that ${\mathcal{\tilde{E}}}/ann\left( {\mathcal{\tilde{E}}}\right)
\cong {\mathcal{E}}$ and ${\mathcal{\tilde{E}}}$ has no annihilator
components. The procedure runs as follows:

\begin{enumerate}
\item Determine ${\mathcal{Z}}\left( {\mathcal{E\times E}},\mathbb{F}\right)
,{\mathcal{B}}\left( {\mathcal{E\times E}},\mathbb{F}\right) $ and ${%
\mathcal{H}}\left( {\mathcal{E\times E}},\mathbb{F}\right) $.

\item Let $V$ be a vector space with a basis $B_{V}=\left\{ e_{m+1},\ldots
,e_{n}\right\} $. Consider $\theta \in {\mathcal{H}}\left( {\mathcal{E\times
E}},V\right) $ with 
\begin{equation*}
\theta (e_{l},e_{l})=\underset{i=1}{\overset{n-m}{\sum }}\theta
_{i}(e_{l},e_{l})e_{m+i},
\end{equation*}%
where the $\theta _{i}\in {\mathcal{H}}\left( {\mathcal{E\times E}},\mathbb{F%
}\right) $ are linearly independent, and $\theta ^{\bot }\cap ann\left( {%
\mathcal{E}}\right) =0$. Use the action of $\mathcal{S}_{\theta }\left( {%
\mathcal{E}}\right) $ on $\theta $ by hand calculations to get a list of
orbit representatives as small as possible.

\item For each orbit found, construct the corresponding algebra.
\end{enumerate}

\section{Nilpotent evolution algebras of dimension $\leq 3$}

\label{dim3}In this section we present a complete classification of
nilpotent evolution algebras of dimension $\leq 3$. We denote the $j$-th
algebra of dimension $i$ by ${\mathcal{E}}_{i,j}$ and among the natural
basis of ${\mathcal{E}}_{i,j}$ the multiplication is specified by giving
only the nonzero products.

\subsection{ Nilpotent evolution algebras of dimension 1}

Let ${\mathcal{E}}$ be an evolution algebra of dimension $1$ with natural
basis $\{e_{1}\}$. If ${\mathcal{E}}$ is nilpotent then $ann\left( {\mathcal{%
E}}\right) =\left\langle e_{1}\right\rangle $. Therefore, there is only one
nilpotent evolution algebra of dimension $1$, that is ${\mathcal{E}}_{1,1}$%
with a natural basis $\{e_{1}\}$ and%
\begin{table}[H] \centering%
$%
\begin{tabular}{|c|c|c|c|c|}
\hline
${\mathcal{E}}$ & Multiplication Table & ${\mathcal{H}}\left( {\mathcal{%
E\times E}},\mathbb{F}\right) $ & $ann\left( {\mathcal{E}}\right) $ & $%
Aut\left( {\mathcal{E}}\right) $ \\ \hline
${\mathcal{E}}_{1,1}$ & ----------------- & $\left\langle \delta
_{e_{1},e_{1}}\right\rangle $ & ${\mathcal{E}}_{1,1}$ & $GL\left( {\mathcal{E%
}}_{1,1}\right) $ \\ \hline
\end{tabular}%
$\caption{One-dimensional nilpotent evolution algebras}%
\end{table}%

\subsection{Nilpotent evolution algebras of dimension 2}

Here we consider the classification of $2$-dimensional nilpotent algebras. 

\subsubsection{Nilpotent algebras with annihilator component}

We consider the $1$-dimensional trivial extensions\emph{\ (}corresponding to 
$\theta =0$) of algebras of dimension $1$. We get ${\mathcal{E}}_{2,1}=$ ${%
\mathcal{E}}_{1,1}\oplus $ ${\mathcal{E}}_{1,1}.$

\subsubsection{Nilpotent algebras with no annihilator component}

Here we consider $1$-dimensional extensions of ${\mathcal{E}}_{1,1}$. We get
only one algebra ${\mathcal{E}}_{2,2}:e_{1}^{2}=e_{2}$ corresponding to $%
\theta =\delta _{e_{1},e_{1}}$.

\begin{theorem}
Up to isomorphism there exactly two nilpotent evolution algebras of
dimension $2$ with a natural basis $B=\{e_{1},e_{2}\}$over a field $\mathbb{F%
}$ which are isomorphic to the following pairwise non-isomorphic algebras:%
\begin{table}[H] \centering%
$%
\begin{tabular}{|c|c|c|c|c|c|}
\hline
${\mathcal{E}}$ & Multiplication Table & $\mathcal{B}\left( {\mathcal{%
E\times E}},\mathbb{F}\right) $ & ${\mathcal{H}}\left( {\mathcal{E\times E}},%
\mathbb{F}\right) $ & $ann\left( {\mathcal{E}}\right) $ & $Aut\left( {%
\mathcal{E}}\right) $ \\ \hline
${\mathcal{E}}_{2,1}$ & ----------------- & $0$ & $\left\langle \delta
_{e_{1},e_{1}},\delta _{e_{2},e_{2}}\right\rangle $ & ${\mathcal{E}}_{2,1}$
& $GL\left( {\mathcal{E}}_{2,1}\right) $ \\ \hline
${\mathcal{E}}_{2,2}$ & $e_{1}^{2}=e_{2}.$ & $\left\langle \delta
_{e_{1},e_{1}}\right\rangle $ & $\left\langle \delta
_{e_{2},e_{2}}\right\rangle $ & $\left\langle e_{2}\right\rangle $ & $\phi =%
\begin{bmatrix}
a_{11} & 0 \\ 
a_{21} & a_{11}^{2}%
\end{bmatrix}%
$ \\ \hline
\end{tabular}%
$\caption{Two-dimensional nilpotent evolution algebras}%
\end{table}%
\end{theorem}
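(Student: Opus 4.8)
The plan is to combine Lemma \ref{construct} with the classification procedure of Section \ref{method}. Let $\mathcal{E}$ be an arbitrary $2$-dimensional nilpotent evolution algebra. Nilpotency forces $ann(\mathcal{E})\neq 0$, so $\dim ann(\mathcal{E})\in\{1,2\}$, and I would split the argument into these two cases, each producing exactly one algebra.

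If $\dim ann(\mathcal{E})=2$ then $\mathcal{E}=ann(\mathcal{E})$ has zero multiplication, so in any basis $\{e_1,e_2\}$ we get $\mathcal{E}=\langle e_1\rangle\oplus\langle e_2\rangle=\mathcal{E}_{1,1}\oplus\mathcal{E}_{1,1}$, the trivial extension $\mathcal{E}_{2,1}$ (the case $\theta=0$). Here $\mathcal{E}^{\langle 2\rangle}=0$, so Lemma \ref{cobound} gives $\dim\mathcal{B}=0$, whence $\mathcal{B}=0$ and $\mathcal{H}=\mathcal{Z}=\langle\delta_{e_1,e_1},\delta_{e_2,e_2}\rangle$; since the product is identically zero, every invertible linear map is an automorphism, so $Aut(\mathcal{E}_{2,1})=GL(\mathcal{E}_{2,1})$. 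This settles the first row of the table.

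If $\dim ann(\mathcal{E})=1$ then $\mathcal{E}/ann(\mathcal{E})$ is $1$-dimensional and nilpotent, hence isomorphic to $\mathcal{E}_{1,1}$, and Lemma \ref{construct} realizes $\mathcal{E}\cong(\mathcal{E}_{1,1})_\theta$ as a $1$-dimensional annihilator extension by $V=\langle e_2\rangle$ with $ann((\mathcal{E}_{1,1})_\theta)=V$, i.e.\ $\theta^\perp\cap ann(\mathcal{E}_{1,1})=0$. Now I would run the procedure. Since $\mathcal{H}(\mathcal{E}_{1,1}\times\mathcal{E}_{1,1},\mathbb{F})=\langle\delta_{e_1,e_1}\rangle$ is one-dimensional, I write $\theta(e_1,e_1)=\theta_1(e_1,e_1)e_2$ with $\theta_1\in\langle\delta_{e_1,e_1}\rangle$; because $ann(\mathcal{E}_{1,1})=\mathcal{E}_{1,1}$, the radical condition $\theta^\perp\cap ann(\mathcal{E}_{1,1})=0$ forces $\theta_1\neq 0$. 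As $\langle\theta_1\rangle=\langle\delta_{e_1,e_1}\rangle$ for every admissible $\theta$, Lemma \ref{iso} shows all these extensions are mutually isomorphic, so choosing the representative $\theta=\delta_{e_1,e_1}$ yields the single algebra $\mathcal{E}_{2,2}:e_1^2=e_2$.

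It remains to record the invariants of $\mathcal{E}_{2,2}$ and to separate the two algebras. The computation $\mathcal{B}=\langle\delta_{e_1,e_1}\rangle$ and $\mathcal{H}=\langle\delta_{e_2,e_2}\rangle$ is precisely the one carried out in the Example treating $e_1^2=e_2$, and the natural-basis description of the annihilator (\cite[Lemma 2.7]{Elduque}) gives $ann(\mathcal{E}_{2,2})=\langle e_2\rangle$ since $e_1^2=e_2\neq 0$ and $e_2^2=0$. For $Aut(\mathcal{E}_{2,2})$ I would take a general linear map $\phi(e_1)=pe_1+qe_2$, $\phi(e_2)=re_1+se_2$ and impose $\phi(xy)=\phi(x)\phi(y)$: evaluating on $e_2e_2$ gives $r^2=0$, hence $r=0$ in every characteristic, and then $\phi(e_1)^2=\phi(e_1^2)$ gives $s=p^2$, producing the stated matrix form $\begin{bmatrix}a_{11}&0\\a_{21}&a_{11}^2\end{bmatrix}$, invertible precisely when $a_{11}=p\neq 0$. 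Finally $\mathcal{E}_{2,1}\not\cong\mathcal{E}_{2,2}$ because $\dim ann(\mathcal{E}_{2,1})=2\neq 1=\dim ann(\mathcal{E}_{2,2})$. The only genuinely computational step, and the mildest obstacle, is this direct verification of the automorphism matrix; everything else is forced by the case split and the one-dimensionality of $\mathcal{H}(\mathcal{E}_{1,1}\times\mathcal{E}_{1,1},\mathbb{F})$.
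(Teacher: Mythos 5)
Your proposal is correct and follows essentially the same route as the paper: your case split $\dim ann({\mathcal{E}})=2$ versus $\dim ann({\mathcal{E}})=1$ is exactly the paper's division into algebras with and without annihilator component, and both arguments rest on Lemma \ref{construct}, the one-dimensionality of ${\mathcal{H}}\left({\mathcal{E}}_{1,1}{\mathcal{\times E}}_{1,1},\mathbb{F}\right)$, and Lemma \ref{iso} to force the unique representatives ${\mathcal{E}}_{2,1}$ and ${\mathcal{E}}_{2,2}$. You merely make explicit some details the paper leaves implicit (the direct computation of $Aut\left({\mathcal{E}}_{2,2}\right)$ and the separation of the two algebras by $\dim ann$), and these are carried out correctly.
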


\subsection{Nilpotent evolution algebras of dimension 3}

Here we classify three-dimensional nilpotent evolution algebras over
arbitrary fields based on nilpotent evolution algebras of dimension up to
two.

\subsubsection{Nilpotent algebras with annihilator component}

Here we consider the 1-dimensional trivial extensions of algebras of
dimension $2$. We get ${\mathcal{E}}_{3,1}=$ ${\mathcal{E}}_{2,1}\oplus $ ${%
\mathcal{E}}_{1,1}$ and ${\mathcal{E}}_{3,2}=$ ${\mathcal{E}}_{2,2}\oplus $ $%
{\mathcal{E}}_{1,1}.$

\subsubsection{Nilpotent algebras with no annihilator component}

\underline{\textbf{1-dimensional extension of} ${\mathcal{E}}_{2,1}$:}

\bigskip

Let $\theta =\alpha \delta _{e_{1},e_{1}}+\beta \delta _{e_{2},e_{2}}$ and $%
\alpha \beta \neq 0$, otherwise $\theta ^{\bot }\cap ann({\mathcal{E}}%
_{2,1})\neq 0$. The $\alpha ^{-1}\theta $ span the same subspace in ${%
\mathcal{H}}\left( {\mathcal{E\times E}},\mathbb{F}\right) $ as $\theta $,
so we may assume $\alpha =1$. For any $\alpha ,\beta \in $ $\mathbb{F}^{\ast
}$, let $\theta _{\alpha }=\delta _{e_{1},e_{1}}+\alpha \delta
_{e_{2},e_{2}},\theta _{\beta }=\delta _{e_{1},e_{1}}+\beta \delta
_{e_{2},e_{2}}$ and assume that $\left( {\mathcal{E}}_{2,1}\right) _{\theta
_{\alpha }}$ is isomorphic to $\left( {\mathcal{E}}_{2,1}\right) _{\theta
_{\beta }}$ then by Lemma \ref{iso}, there exist a $\phi \in Aut\left( {%
\mathcal{E}}_{2,1}\right) $ and $\lambda \in \mathbb{F}^{\ast }$ such that $%
\phi \theta _{\alpha }=\lambda \theta _{\beta }$. Thus $\phi ^{t}%
\begin{bmatrix}
1 & 0 \\ 
0 & \alpha 
\end{bmatrix}%
\phi =\lambda \allowbreak 
\begin{bmatrix}
1 & 0 \\ 
0 & \beta 
\end{bmatrix}%
\allowbreak $, it follows that $\beta =\left( \delta =\frac{\det \phi }{%
\lambda }\right) ^{2}\alpha $. On the other hand, assume $\beta =$ $\delta
^{2}\alpha $. Then $\phi \theta _{\alpha }=\theta _{\beta }$ where $\phi =%
\begin{bmatrix}
1 & 0 \\ 
0 & \delta 
\end{bmatrix}%
$, then by Lemma \ref{iso} $\left( {\mathcal{E}}_{2,1}\right) _{\theta
_{\alpha }}$ is isomorphic to $\left( {\mathcal{E}}_{2,1}\right) _{\theta
_{\beta }}$. Hence $\left( {\mathcal{E}}_{2,1}\right) _{\theta _{\alpha }}$
is isomorphic to $\left( {\mathcal{E}}_{2,1}\right) _{\theta _{\beta }}$ if
and only if there is a $\delta \in \mathbb{F}^{\ast }$ such that $\beta =$ $%
\delta ^{2}\alpha $. So we get the algebras:

${\mathcal{E}}_{3,3}^{\alpha }:e_{1}^{2}=e_{3},e_{2}^{2}=\alpha e_{3}$ $%
;\alpha \in \mathbb{F}^{\ast }/\mathbb{F}^{\ast 2}.$

\bigskip

\underline{\textbf{1-dimensional extension of }${\mathcal{E}}_{2,2}$:}

\bigskip

We get only one algebra ${\mathcal{E}}_{3,4}:e_{1}^{2}=e_{2},e_{2}^{2}=e_{3}$
corresponding to $\theta =\delta _{e_{2},e_{2}}$.

\bigskip

\underline{\textbf{2-dimensional extension of }${\mathcal{E}}_{1,1}$:}

\bigskip

Here we have $H({\mathcal{E}}_{1,1}\times {\mathcal{E}}_{1,1},\mathbb{F}%
)=\left\langle \delta _{e_{1},e_{1}}\right\rangle $, so there is no
2-dimensional extension of\textbf{\ }${\mathcal{E}}_{1,1}.$

\begin{theorem}
\label{clas3}Up to isomorphism any 3-dimensional nilpotent evolution
algebras with a natural basis $B=\left\{ e_{1},e_{2},e_{3}\right\} $ over a
field $\mathbb{F}$\ is isomorphic to one of the following$\mathcal{\ }$%
pairwise non-isomorphic nilpotent evolution algebras:%
\begin{table}[H] \centering%
\begin{tabular}{|c|c|c|c|c|c|}
\hline
${\mathcal{E}}$ & Multiplication Table & ${\mathcal{B}}\left( {\mathcal{%
E\times E}},\mathbb{F}\right) $ & ${\mathcal{H}}\left( {\mathcal{E\times E}},%
\mathbb{F}\right) $ & $ann\left( {\mathcal{E}}\right) $ & $Aut\left( {%
\mathcal{E}}\right) $ \\ \hline
${\mathcal{E}}_{3,1}$ & \multicolumn{1}{|l|}{-----------------} & $0$ & $%
Sym\left( {\mathcal{E}}_{3,1}\right) $ & ${\mathcal{E}}_{3,1}$ & $GL\left( {%
\mathcal{E}}_{3,1}\right) $ \\ \hline
${\mathcal{E}}_{3,2}$ & \multicolumn{1}{|l|}{$e_{1}^{2}=e_{2}.$} & $%
\left\langle \delta _{e_{1},e_{1}}\right\rangle $ & $\left\langle \delta
_{e_{2},e_{2}},\delta _{e_{3},e_{3}}\right\rangle $ & $\left\langle
e_{2},e_{3}\right\rangle $ & $\phi =%
\begin{bmatrix}
a_{11} & 0 & 0 \\ 
a_{21} & a_{11}^{2} & a_{23} \\ 
a_{31} & 0 & a_{33}%
\end{bmatrix}%
$ \\ \hline
${\mathcal{E}}_{3,3}^{\alpha \in \mathbb{F}^{\ast }\diagup \mathbb{F}^{\ast
2}}$ & \multicolumn{1}{|l|}{$e_{1}^{2}=e_{3},e_{2}^{2}=\alpha e_{3}.$} & $%
\left\langle \delta _{e_{1},e_{1}}+\delta _{e_{2},e_{2}}\right\rangle $ & $%
\left\langle \delta _{e_{1},e_{1}},\delta _{e_{3},e_{3}}\right\rangle $ & $%
\left\langle e_{3}\right\rangle $ & $%
\begin{array}{l}
\phi =%
\begin{bmatrix}
a_{11} & a_{12} & 0 \\ 
a_{21} & a_{22} & 0 \\ 
a_{31} & a_{32} & a_{33}%
\end{bmatrix}%
, \\ 
a_{11}^{2}+\alpha a_{21}^{2}=a_{33}, \\ 
a_{12}^{2}+\alpha a_{22}^{2}=\alpha a_{33}, \\ 
a_{11}a_{12}+\alpha a_{21}a_{22}=0.%
\end{array}%
$ \\ \hline
${\mathcal{E}}_{3,4}$ & \multicolumn{1}{|l|}{$%
e_{1}^{2}=e_{2},e_{2}^{2}=e_{3}.$} & $\left\langle \delta
_{e_{1},e_{1}},\delta _{e_{2},e_{2}}\right\rangle $ & $\left\langle \delta
_{e_{3},e_{3}}\right\rangle $ & $\left\langle e_{3}\right\rangle $ & $\phi =%
\begin{bmatrix}
a_{11} & 0 & 0 \\ 
0 & a_{11}^{2} & 0 \\ 
a_{31} & 0 & a_{11}^{4}%
\end{bmatrix}%
$ \\ \hline
\end{tabular}%
\caption{Three-dimensional nilpotent evolution algebras}%
\end{table}%
\end{theorem}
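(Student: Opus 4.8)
The plan is to run the annihilator-extension procedure of Sections \ref{construction} and \ref{method} with the two-dimensional list as input. First I would note that a three-dimensional nilpotent evolution algebra ${\mathcal{E}}$ has $ann({\mathcal{E}}) \neq 0$, so by Lemma \ref{construct} it is an annihilator extension of $M = {\mathcal{E}}/ann({\mathcal{E}})$ by $V = ann({\mathcal{E}})$. Since $\dim M = 3 - \dim V \leq 2$ and $M$ is itself nilpotent, the two-dimensional classification forces $M \in \{{\mathcal{E}}_{1,1}, {\mathcal{E}}_{2,1}, {\mathcal{E}}_{2,2}\}$. This reduces the problem to finitely many extension problems, organized by the base $M$ and the extension dimension $\dim V \in \{1, 2\}$.

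Next I would split off the algebras possessing an annihilator component: these are precisely the one-dimensional trivial extensions of the two-dimensional algebras, contributing ${\mathcal{E}}_{3,1} = {\mathcal{E}}_{2,1} \oplus {\mathcal{E}}_{1,1}$ and ${\mathcal{E}}_{3,2} = {\mathcal{E}}_{2,2} \oplus {\mathcal{E}}_{1,1}$. For the algebras with no annihilator component I would apply the three-step procedure of Section \ref{method} to each admissible base. In every case I would first record ${\mathcal{H}}({\mathcal{E}} \times {\mathcal{E}}, \mathbb{F})$ via Lemma \ref{cobound}, then impose $\theta^{\perp} \cap ann({\mathcal{E}}) = 0$ so that Lemma \ref{rad} gives $ann({\mathcal{E}}_\theta) = V$ and Corollary \ref{diff} guarantees the base is recovered unambiguously, and finally reduce $\theta$ modulo the action of ${\mathcal{S}}_\theta({\mathcal{E}})$ using Lemma \ref{iso}. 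The two-dimensional extension of ${\mathcal{E}}_{1,1}$ is immediately empty because $\dim {\mathcal{H}}({\mathcal{E}}_{1,1} \times {\mathcal{E}}_{1,1}, \mathbb{F}) = 1$ leaves no room for two independent components, and the one-dimensional extension of ${\mathcal{E}}_{2,2}$ yields the single algebra ${\mathcal{E}}_{3,4}$ after normalizing the unique nonzero component $\delta_{e_2,e_2}$.

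The main obstacle is the one-dimensional extension of ${\mathcal{E}}_{2,1}$, whose automorphism group is all of $GL_2(\mathbb{F})$. Here the admissible cocycles are $\theta = \delta_{e_1,e_1} + \alpha\,\delta_{e_2,e_2}$ with $\alpha \neq 0$ (the condition $\theta^{\perp} \cap ann({\mathcal{E}}_{2,1}) = 0$ forces both diagonal entries nonzero, since $ann({\mathcal{E}}_{2,1})$ is the whole space), represented by $\mathrm{diag}(1, \alpha)$. The subtlety is that the transformed form $\phi\theta = \phi^{t}\,\mathrm{diag}(1,\alpha)\,\phi$ must land back in ${\mathcal{Z}}({\mathcal{E}} \times {\mathcal{E}}, \mathbb{F})$, i.e. stay diagonal, which is exactly the off-diagonal constraint defining ${\mathcal{S}}_\theta({\mathcal{E}}_{2,1})$. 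Comparing determinants of $\phi\theta$ and $\lambda\,\mathrm{diag}(1,\beta)$ yields $\beta = \delta^2 \alpha$ with $\delta = \det\phi / \lambda$, and the converse is witnessed by $\phi = \mathrm{diag}(1, \delta)$. Hence the orbits are indexed by the square-class of $\alpha$, producing the family ${\mathcal{E}}_{3,3}^{\alpha}$ with $\alpha \in \mathbb{F}^{\ast}/\mathbb{F}^{\ast 2}$; this is the single point where the answer depends on the arithmetic of $\mathbb{F}$.

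Finally I would verify pairwise non-isomorphism by reading off intrinsic invariants. The values of $\dim ann({\mathcal{E}})$, namely $3, 2, 1, 1$, separate ${\mathcal{E}}_{3,1}, {\mathcal{E}}_{3,2}$ and the pair $\{{\mathcal{E}}_{3,3}^{\alpha}, {\mathcal{E}}_{3,4}\}$; within that pair $\dim {\mathcal{E}}^{\langle 2\rangle}$ equals $1$ for ${\mathcal{E}}_{3,3}^{\alpha}$ and $2$ for ${\mathcal{E}}_{3,4}$, distinguishing them; and within the family ${\mathcal{E}}_{3,3}^{\alpha}$ distinct square-classes of $\alpha$ give non-isomorphic algebras by the computation above. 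Assembling the surviving representatives reproduces exactly the four entries of the asserted table.
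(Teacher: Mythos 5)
Your proposal is correct and follows essentially the same route as the paper: decompose each three-dimensional nilpotent evolution algebra as an annihilator extension over the two-dimensional list, split off the algebras with annihilator component as trivial extensions, and settle the one nontrivial case (extensions of ${\mathcal{E}}_{2,1}$) by the identical determinant computation $\beta=\delta^{2}\alpha$ giving the square-class family ${\mathcal{E}}_{3,3}^{\alpha}$, $\alpha\in\mathbb{F}^{\ast}/\mathbb{F}^{\ast 2}$. Your closing invariant check ($\dim ann\left( {\mathcal{E}}\right)$ and $\dim {\mathcal{E}}^{\left\langle 2\right\rangle }$) merely makes explicit the pairwise non-isomorphism that the paper leaves to its machinery (Corollary \ref{diff} and Lemma \ref{iso}), so the two arguments coincide in substance.
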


From Theorem \ref{clas3}, up to isomorphism, the number of 3-dimensional
nilpotent evolution algebras:%
\begin{table}[H] \centering%
\begin{tabular}{|c|c|c|c|}
\hline
Over an algebraic closed field $\mathbb{F\ }$is & Over $%
\mathbb{R}
$ is & Over $%
\mathbb{Q}
$ is & Over any finite field is \\ \hline
4 & 5 & $\infty $ & $%
\begin{tabular}{c}
5$\text{ if characteristic }\mathbb{F}$ is odd \\ 
4$\text{ if characteristic }\mathbb{F}$ is even%
\end{tabular}%
$ \\ \hline
\end{tabular}%
\caption{Number of 3-dimensional nilpotent Evolution algebras}%
\end{table}%

\section{Nilpotent evolution algebras of dimension 4}

\label{dim4}In this section we present a complete classification of $4$%
-dimensional nilpotent evolution algebras over an algebraic closed field and 
$%
\mathbb{R}
$. We denote real nilpotent evolution algebras of dimension $4$ by $E_{4,j}$.

\subsection{Nilpotent algebras with annihilator component}

Here we consider the $1$-dimensional trivial extension\emph{\ }corresponding
to $\theta =0$ of algebras of dimension $3$.\emph{\ }We get ${\mathcal{E}}%
_{4,1}=E_{4,1}=$ ${\mathcal{E}}_{3,1}\oplus $ ${\mathcal{E}}_{1,1},{\mathcal{%
E}}_{4,2}=E_{4,2}={\mathcal{E}}_{3,2}\oplus {\mathcal{E}}_{1,1},{\mathcal{E}}%
_{4,3}=E_{4,3}=$ ${\mathcal{E}}_{3,3}^{\alpha =1}\oplus $ ${\mathcal{E}}%
_{1,1},E_{4,4}=$ ${\mathcal{E}}_{3,3}^{\alpha =-1}\oplus $ ${\mathcal{E}}%
_{1,1}$ and ${\mathcal{E}}_{4,4}=E_{4,5}=$ ${\mathcal{E}}_{3,4}\oplus $ ${%
\mathcal{E}}_{1,1}$.

\subsection{Nilpotent algebras with no annihilator component}

\underline{\textbf{1-dimensional extension of }${\mathcal{E}}_{3,1}:$}

\bigskip

Let $\theta =\alpha \delta _{e_{1},e_{1}}+\beta \delta _{e_{2},e_{2}}+\gamma
\delta _{e_{3},e_{3}}$ and $\alpha \beta \gamma \neq 0$. After dividing we
may assume that $\theta =\delta _{e_{1},e_{1}}+\beta \delta
_{e_{2},e_{2}}+\gamma \delta _{e_{3},e_{3}}$ and $\beta \gamma \neq 0$.
Consider $\phi _{1},\phi _{2},\phi _{3},\phi _{4}\in \mathcal{S}_{\theta
}\left( {\mathcal{E}}_{3,1}\right) $ such that%
\begin{eqnarray*}
\phi _{1} &=&%
\begin{bmatrix}
1 & 0 & 0 \\ 
0 & \beta ^{-\frac{1}{2}} & 0 \\ 
0 & 0 & \gamma ^{-\frac{1}{2}}%
\end{bmatrix}%
,\phi _{2}=%
\begin{bmatrix}
1 & 0 & 0 \\ 
0 & \beta ^{-\frac{1}{2}} & 0 \\ 
0 & 0 & \left( -\gamma \right) ^{-\frac{1}{2}}%
\end{bmatrix}%
,\phi _{3}=%
\begin{bmatrix}
1 & 0 & 0 \\ 
0 & 0 & \left( -\beta \right) ^{-\frac{1}{2}} \\ 
0 & \gamma ^{-\frac{1}{2}} & 0%
\end{bmatrix}%
, \\
\phi _{4} &=&%
\begin{bmatrix}
0 & 0 & 1 \\ 
0 & \left( -\beta \right) ^{-\frac{1}{2}} & 0 \\ 
\left( -\gamma \right) ^{-\frac{1}{2}} & 0 & 0%
\end{bmatrix}%
.
\end{eqnarray*}

\begin{itemize}
\item \textit{Over an algebraic closed field} $\mathbb{F}$:

Here $\beta ,\gamma $ are squares then $\phi _{1}\theta =\delta
_{e_{1},e_{1}}+\delta _{e_{2},e_{2}}+\delta _{e_{3},e_{3}}$. So we get the
algebra ${\mathcal{E}}_{4,5}:e_{1}^{2}=e_{2}^{2}=e_{3}^{2}=e_{4}$.

\item \textit{Over the real field }$%
\mathbb{R}
$\textit{:}

\begin{itemize}
\item If $\beta >0,\gamma >0$ then $\phi _{1}\theta =\theta _{1}=\delta
_{e_{1},e_{1}}+\delta _{e_{2},e_{2}}+\delta _{e_{3},e_{3}}$.

\item If $\beta >0,\gamma <0$ then $\phi _{2}\theta =\theta _{2}=\delta
_{e_{1},e_{1}}+\delta _{e_{2},e_{2}}-\delta _{e_{3},e_{3}}$.

\item If $\beta <0,\gamma >0$ then $\phi _{3}\theta =\theta _{2}=\delta
_{e_{1},e_{1}}+\delta _{e_{2},e_{2}}-\delta _{e_{3},e_{3}}$.

\item If $\beta <0,\gamma <0$ then $\phi _{4}\theta =-\theta _{2}=-\left(
\delta _{e_{1},e_{1}}+\delta _{e_{2},e_{2}}-\delta _{e_{3},e_{3}}\right) $.
\end{itemize}

Therefore we have two representatives $\theta _{1},\theta _{2}$ .\ It
remains to decide isomorphism between $\left( {\mathcal{E}}_{3,1}\right)
_{\theta _{1}},\left( {\mathcal{E}}_{3,1}\right) _{\theta _{2}}$. By Lemma %
\ref{iso}, $\left( {\mathcal{E}}_{3,1}\right) _{\theta _{1}}$is isomorphic
to $\left( {\mathcal{E}}_{3,1}\right) _{\theta _{2}}$ if and only if there
exist a $\phi \in Aut\left( {\mathcal{E}}_{3,1}\right) $ and $\lambda \in 
\mathbb{R}
^{\ast }$ such that $\phi \theta _{1}=\lambda \theta _{2}$. This amounts to
(among others) the equations:%
\begin{equation*}
a_{11}^{2}+a_{21}^{2}+a_{31}^{2}=\lambda
,a_{12}^{2}+a_{22}^{2}+a_{32}^{2}=\lambda
,a_{13}^{2}+a_{23}^{2}+a_{33}^{2}=-\lambda ,
\end{equation*}%
which never have a solution in $%
\mathbb{R}
$. Thus $\left( {\mathcal{E}}_{3,1}\right) _{\theta _{1}},\left( {\mathcal{E}%
}_{3,1}\right) _{\theta _{2}}$ are non-isomorphic. So we get only two
algebras:

$E_{4,6}:e_{1}^{2}=e_{2}^{2}=e_{3}^{2}=e_{4}.$

$E_{4,7}:e_{1}^{2}=e_{2}^{2}=e_{4},e_{3}^{2}=-e_{4}.$
\end{itemize}

\underline{\textbf{1-dimensional extension of }${\mathcal{E}}_{3,2}:$}

\bigskip

Let $\theta =\alpha \delta _{e_{2},e_{2}}+\beta \delta _{e_{3},e_{3}}$ and $%
\alpha \beta \neq 0$. We can divide to get $\alpha =1$.\ So we may assume
that $\theta =\delta _{e_{2},e_{2}}+\beta \delta _{e_{3},e_{3}}$. Consider $%
\phi _{1},\phi _{2}\in \mathcal{S}_{\theta }\left( {\mathcal{E}}%
_{3,2}\right) $ such that 
\begin{equation*}
\phi _{1}=%
\begin{bmatrix}
1 & 0 & 0 \\ 
0 & 1 & 0 \\ 
0 & 0 & \beta ^{-\frac{1}{2}}%
\end{bmatrix}%
,\phi _{2}=%
\begin{bmatrix}
1 & 0 & 0 \\ 
0 & 1 & 0 \\ 
0 & 0 & \left( -\beta \right) ^{-\frac{1}{2}}%
\end{bmatrix}%
.
\end{equation*}

\begin{itemize}
\item \textit{Over an algebraic closed field }$\mathbb{F}$\textit{:}

Here $\beta $ is a square then $\phi _{1}\theta =\delta
_{e_{2},e_{2}}+\delta _{e_{3},e_{3}}$. So we get the algebra ${\mathcal{E}}%
_{4,6}:e_{1}^{2}=e_{2},e_{2}^{2}=e_{3}^{2}=e_{4}$.

\item \textit{Over the real field }$%
\mathbb{R}
$\textit{:}

\begin{itemize}
\item If $\beta >0$ then $\phi _{1}\theta =\theta _{1}=\delta
_{e_{2},e_{2}}+\delta _{e_{3},e_{3}}$.

\item If $\beta <0$ then $\phi _{2}\theta =\theta _{2}=\delta
_{e_{2},e_{2}}-\delta _{e_{3},e_{3}}$.
\end{itemize}

Suppose that $\phi \in Aut\left( {\mathcal{E}}_{3,2}\right) $ and $\lambda
\in 
\mathbb{R}
^{\ast }$ such that $\phi \theta _{1}=\lambda \theta _{2}$. Then we get
(among others) the equations 
\begin{equation*}
a_{22}^{2}+a_{32}^{2}=\lambda ,a_{33}^{2}=-\lambda ,
\end{equation*}%
which never have a solutions in $%
\mathbb{R}
$. Thus $\left( {\mathcal{E}}_{3,2}\right) _{\theta _{1}},\left( {\mathcal{E}%
}_{3,2}\right) _{\theta _{2}}$ are not isomorphic. So we get only two
algebras:

$E_{4,8}:e_{1}^{2}=e_{2},e_{2}^{2}=e_{3}^{2}=e_{4}.$

$E_{4,9}:e_{1}^{2}=e_{2},e_{2}^{2}=e_{4},e_{3}^{2}=-e_{4}$.
\end{itemize}

\underline{\textbf{1-dimensional extension of }${\mathcal{E}}_{3,3}^{\alpha
} $:}

\bigskip

Let $\theta =\gamma \delta _{e_{1},e_{1}}+\beta \delta _{e_{3},e_{3}}$ and $%
\beta \neq 0$. After dividing we may assume that $\theta =\gamma \left(
\delta _{e_{1},e_{1}}+\alpha \delta _{e_{2},e_{2}}\right) +\delta
_{e_{3},e_{3}}$. Consider $\phi _{1},\phi _{2},\phi _{3}\in \mathcal{S}%
_{\theta }\left( {\mathcal{E}}_{3,3}^{\alpha }\right) $ such that 
\begin{equation*}
\phi _{1}=%
\begin{bmatrix}
\left( -\gamma \right) ^{\frac{1}{2}} & 0 & 0 \\ 
0 & \left( -\gamma \right) ^{\frac{1}{2}} & 0 \\ 
0 & 0 & -\gamma 
\end{bmatrix}%
,\phi _{2}=%
\begin{bmatrix}
\gamma ^{\frac{1}{2}} & 0 & 0 \\ 
0 & \gamma ^{\frac{1}{2}} & 0 \\ 
0 & 0 & \gamma 
\end{bmatrix}%
,\phi _{3}=%
\begin{bmatrix}
0 & \left( -\gamma \right) ^{\frac{1}{2}} & 0 \\ 
\left( -\gamma \right) ^{\frac{1}{2}} & 0 & 0 \\ 
0 & 0 & -\gamma 
\end{bmatrix}%
.
\end{equation*}

\begin{itemize}
\item \textit{Over an algebraic closed field }$\mathbb{F}$\textit{. }In this
case $\alpha =1$.

If $\gamma =0$ then we get $\theta _{1}=\delta _{e_{3},e_{3}}$. If $\gamma
\neq 0$ then $\phi _{2}\theta =\gamma ^{2}\left( \delta
_{e_{1},e_{1}}+\delta _{e_{3},e_{3}}\right) $. Then after dividing we may
assume that $\theta _{2}=\delta _{e_{1},e_{1}}+\delta _{e_{3},e_{3}}$.
Suppose that $\phi \in Aut\left( {\mathcal{E}}_{3,3}^{\alpha =1}\right) $
and $\lambda \in \mathbb{F}^{\ast }$ such that $\phi \theta _{1}=\lambda
\theta _{2}$. 
\begin{equation*}
a_{31}^{2}-a_{32}^{2}=\lambda ,a_{33}^{2}=\lambda
,a_{31}a_{32}=a_{31}a_{33}=a_{32}a_{33}=0,
\end{equation*}%
which never have a solution.  So, $\left( {\mathcal{E}}_{3,3}^{\alpha
=1}\right) _{\theta _{1}}$ is not isomorphic to $\left( {\mathcal{E}}%
_{3,3}^{\alpha =1}\right) _{\theta _{2}}$.Therefore we get the following
non-isomorphic algebras:

${\mathcal{E}}_{4,7}:e_{1}^{2}=e_{3},e_{2}^{2}=e_{3},e_{3}^{2}=e_{4}$.

${\mathcal{E}}_{4,8}:e_{1}^{2}=e_{3}+e_{4},e_{2}^{2}=e_{3},e_{3}^{2}=e_{4}$.

\item \textit{Over the real field }$%
\mathbb{R}
$\textit{. In this case }$\alpha =\pm 1$.

\underline{\textrm{Case1.} $\alpha =1$:}

\begin{itemize}
\item If $\gamma =0$, then we get $\theta _{1}=\delta _{e_{3},e_{3}}$.

\item If $\gamma <0$ then $\phi _{3}\theta =\gamma ^{2}\left( -\delta
_{e_{2},e_{2}}+\delta _{e_{3},e_{3}}\right) $. Since ${\mathcal{B}}\left( {%
\mathcal{E}}_{3,3}^{\alpha =1}{\mathcal{\times E}}_{3,3}^{\alpha =1},%
\mathbb{R}
\right) =\left\langle \delta _{e_{1},e_{1}}+\delta
_{e_{2},e_{2}}\right\rangle $, $\delta _{e_{1},e_{1}}=-\delta _{e_{2},e_{2}}$
in ${\mathcal{H}}\left( {\mathcal{E}}_{3,3}^{\alpha =1}{\mathcal{\times E}}%
_{3,3}^{\alpha =1},%
\mathbb{R}
\right) $. So, $\phi _{3}\theta =\gamma ^{2}\left( \delta
_{e_{1},e_{1}}+\delta _{e_{3},e_{3}}\right) $. So after dividing we may
assume that $\theta _{2}=\delta _{e_{1},e_{1}}+\delta _{e_{3},e_{3}}$.

\item If $\gamma >0$ then $\phi _{2}\theta =\gamma ^{2}\left( \delta
_{e_{1},e_{1}}+\delta _{e_{3},e_{3}}\right) $. Therefore we a gain get $%
\theta _{2}=\delta _{e_{1},e_{1}}+\delta _{e_{3},e_{3}}$.
\end{itemize}

Suppose that $\phi \in Aut\left( {\mathcal{E}}_{3,3}^{\alpha =1}\right) $
and $\lambda \in \mathbb{F}^{\ast }$ such that $\phi \theta _{1}=\lambda
\theta _{2}$. This then amounts to the following equations: 
\begin{equation*}
a_{31}^{2}-a_{32}^{2}=\lambda ,a_{33}^{2}=\lambda
,a_{31}a_{32}=a_{31}a_{33}=a_{32}a_{33}=0,
\end{equation*}%
which never have a solution in $%
\mathbb{R}
$. So, $\left( {\mathcal{E}}_{3,3}^{\alpha =1}\right) _{\theta _{1}}$ is not
isomorphic to $\left( {\mathcal{E}}_{3,3}^{\alpha =1}\right) _{\theta _{2}}$%
.Therefore we get the following non-isomorphic algebras:

$E_{4,10}:e_{1}^{2}=e_{3},e_{2}^{2}=e_{3},e_{3}^{2}=e_{4}.$

$E_{4,11}:e_{1}^{2}=e_{3}+e_{4},e_{2}^{2}=e_{3},e_{3}^{2}=e_{4}.$

\underline{\textrm{Case 2.} $\alpha =-1$:}

\begin{itemize}
\item If $\gamma =0$ then we get $\theta _{1}=\delta _{e_{3},e_{3}}$.

\item If $\gamma <0$ then $\phi _{3}\theta =-\gamma ^{2}\left( \delta
_{e_{1},e_{1}}-\delta _{e_{3},e_{3}}\right) $. So after dividing we may
assume that $\theta _{2}=\delta _{e_{1},e_{1}}-\delta _{e_{3},e_{3}}$.

\item If $\gamma >0$ then $\phi _{2}\theta =\gamma ^{2}\left( \delta
_{e_{1},e_{1}}+\delta _{e_{3},e_{3}}\right) $. So after dividing we may
assume that $\theta _{3}=\delta _{e_{1},e_{1}}+\delta _{e_{3},e_{3}}$.
\end{itemize}

Firstly, suppose that $\phi \in Aut\left( {\mathcal{E}}_{3,3}^{\alpha
=-1}\right) $ and $\lambda \in 
\mathbb{R}
^{\ast }$ such that $\phi \theta _{1}=\lambda \theta _{2}$. Then we get the
following equations 
\begin{equation*}
a_{31}^{2}+a_{32}^{2}=\lambda ,a_{33}^{2}=-\lambda
,a_{31}a_{32}=a_{31}a_{33}=a_{32}a_{33}=0,
\end{equation*}%
which never have a solution in $%
\mathbb{R}
$. So $\left( {\mathcal{E}}_{3,3}^{\alpha =-1}\right) _{\theta _{1}}$ is not
isomorphic to $\left( {\mathcal{E}}_{3,3}^{\alpha =-1}\right) _{\theta _{2}}$%
.

Next, suppose that $\phi \in Aut\left( {\mathcal{E}}_{3,3}^{\alpha
=-1}\right) $ and $\lambda \in 
\mathbb{R}
^{\ast }$ such that $\phi \theta _{1}=\lambda \theta _{3}$. Then we get the
equations 
\begin{equation*}
a_{31}^{2}+a_{32}^{2}=\lambda ,a_{33}^{2}=\lambda
,a_{31}a_{32}=a_{31}a_{33}=a_{32}a_{33}=0,
\end{equation*}%
which never have a solution in $%
\mathbb{R}
$. So $\left( {\mathcal{E}}_{3,3}^{\alpha =-1}\right) _{\theta _{1}}$ is not
isomorphic to $\left( {\mathcal{E}}_{3,3}^{\alpha =-1}\right) _{\theta _{3}}$%
.

Finally, suppose that $\phi \in Aut\left( {\mathcal{E}}_{3,3}^{\alpha
=-1}\right) $ and $\lambda \in 
\mathbb{R}
^{\ast }$ such that $\phi \theta _{3}=\lambda \theta _{2}$. Then we get
(among others) the equations%
\begin{equation*}
a_{11}^{2}+a_{31}^{2}+a_{12}^{2}+a_{32}^{2}=\lambda ,a_{33}^{2}=-\lambda ,
\end{equation*}%
which never have a solution in $%
\mathbb{R}
$. So $\left( {\mathcal{E}}_{3,3}^{\alpha =-1}\right) _{\theta _{2}}$ is not
isomorphic to $\left( {\mathcal{E}}_{3,3}^{\alpha =-1}\right) _{\theta _{3}}$%
. Hence we get the following pairwise non-isomorphic algebras:

$E_{4,12}:e_{1}^{2}=e_{3},e_{2}^{2}=-e_{3},e_{3}^{2}=e_{4}.$

$E_{4,13}:e_{1}^{2}=e_{3}+e_{4},e_{2}^{2}=-e_{3},e_{3}^{2}=-e_{4}.$

$E_{4,14}:e_{1}^{2}=e_{3}+e_{4},e_{2}^{2}=-e_{3},e_{3}^{2}=e_{4}.$
\end{itemize}

\underline{\textbf{1-dimensional extension of }${\mathcal{E}}_{3,4}:$}

\begin{itemize}
\item \textit{Over an algebraic closed field }$\mathbb{F}$:

Since ${\mathcal{H}}\left( {\mathcal{E}}_{3,4}{\mathcal{\times E}}_{3,4},%
\mathbb{F}\right) =\left\langle \delta _{e_{3},e_{3}}\right\rangle $, we get
only one algebra ${\mathcal{E}}%
_{4,9}:e_{1}^{2}=e_{2},e_{2}^{2}=e_{3},e_{3}^{2}=e_{4}$.

\item \textit{Over the real field }$%
\mathbb{R}
$\textit{:}

Also, ${\mathcal{H}}\left( {\mathcal{E}}_{3,4}{\mathcal{\times E}}_{3,4},%
\mathbb{R}
\right) =\left\langle \delta _{e_{3},e_{3}}\right\rangle $. So, we get only
one algebra $E_{4,15}:e_{1}^{2}=e_{2},e_{2}^{2}=e_{3},e_{3}^{2}=e_{4}$.
\end{itemize}

\underline{\textbf{2-dimensional extension of }${\mathcal{E}}_{2,1}$:}

\begin{itemize}
\item \textit{Over an algebraic closed field }$\mathbb{F}$:

As ${\mathcal{H}}\left( {\mathcal{E}}_{2,1}{\mathcal{\times E}}_{2,1},%
\mathbb{F}\right) =\left\langle \delta _{e_{1},e_{1}},\delta
_{e_{2},e_{2}}\right\rangle $, we get only one algebra ${\mathcal{E}}%
_{4,10}:e_{1}^{2}=e_{3},e_{2}^{2}=e_{4}$.

\item \textit{Over the real field }$%
\mathbb{R}
$\textit{:}

Also, ${\mathcal{H}}\left( {\mathcal{E}}_{2,1}{\mathcal{\times E}}_{2,1},%
\mathbb{R}
\right) =\left\langle \delta _{e_{1},e_{1}},\delta
_{e_{2},e_{2}}\right\rangle $ and so we get only one algebra $%
E_{4,16}:e_{1}^{2}=e_{3},e_{2}^{2}=e_{4}$.
\end{itemize}

\underline{\textbf{2-dimensional extension of }${\mathcal{E}}_{2,2}$:}

\bigskip

Here ${\mathcal{H}}\left( {\mathcal{E}}_{2,2}\times {\mathcal{E}}_{2,2},%
\mathbb{F}\right) =\left\langle \delta _{e_{2},e_{2}}\right\rangle $ and
hence ${\mathcal{E}}_{2,2}$ has no 2-dimensional extension.

\bigskip

\underline{\textbf{3-dimensional extension of }${\mathcal{E}}_{1,1}$:}

\bigskip

Here ${\mathcal{H}}\left( {\mathcal{E}}_{1,1}\times {\mathcal{E}}_{1,1},%
\mathbb{F}\right) =\left\langle \delta _{e_{1},e_{1}}\right\rangle $ and
hence ${\mathcal{E}}_{1,1}$ has no 3-dimensional extension.

\begin{theorem}
Up to isomorphism there exist $10$ nilpotent evolution algebras of dimension
four with a natural basis $B=\left\{ e_{1},e_{2},e_{3},e_{4}\right\} $ over
an algebraic closed field $\mathbb{F}$ which are isomorphic to one of the
following\ pairwise non-isomorphic nilpotent evolution algebras:

\begin{itemize}
\item ${\mathcal{E}}_{4,1}:\mathcal{\ }$All products are zeros.

\item ${\mathcal{E}}_{4,2}:e_{1}^{2}=e_{2}.$

\item ${\mathcal{E}}_{4,3}:e_{1}^{2}=e_{3},e_{2}^{2}=e_{3}.$

\item ${\mathcal{E}}_{4,4}:e_{1}^{2}=e_{2},e_{2}^{2}=e_{3}.$

\item ${\mathcal{E}}_{4,5}:e_{1}^{2}=e_{4},e_{2}^{2}=e_{4},e_{3}^{2}=e_{4}.$

\item ${\mathcal{E}}_{4,6}:e_{1}^{2}=e_{2},e_{2}^{2}=e_{4},e_{3}^{2}=e_{4}.$

\item ${\mathcal{E}}_{4,7}:e_{1}^{2}=e_{3},e_{2}^{2}=e_{3},e_{3}^{2}=e_{4}.$

\item ${\mathcal{E}}%
_{4,8}:e_{1}^{2}=e_{3}+e_{4},e_{2}^{2}=e_{3},e_{3}^{2}=e_{4}.$

\item ${\mathcal{E}}_{4,9}:e_{1}^{2}=e_{2},e_{2}^{2}=e_{3},e_{3}^{2}=e_{4}.$

\item ${\mathcal{E}}_{4,10}:e_{1}^{2}=e_{3},e_{2}^{2}=e_{4}.$
\end{itemize}
\end{theorem}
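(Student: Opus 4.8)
The plan is to exploit the extension dichotomy set up in Section \ref{method}. Since a nilpotent evolution algebra $\mathcal{E}$ has $ann(\mathcal{E})\neq 0$, Lemma \ref{construct} realises every four-dimensional such $\mathcal{E}$ as an annihilator extension of $M=\mathcal{E}/ann(\mathcal{E})$, a nilpotent evolution algebra of dimension $d<4$ already classified in Theorem \ref{clas3} and the tables preceding it. I would first split the target classes by whether or not $\mathcal{E}$ has an annihilator component, which is an isomorphism invariant. The algebras with an annihilator component are exactly the $1$-dimensional trivial extensions $\mathcal{E}_{3,j}\oplus\mathcal{E}_{1,1}$; since over an algebraically closed field $\mathbb{F}^{\ast}/\mathbb{F}^{\ast 2}$ is trivial, the family $\mathcal{E}_{3,3}^{\alpha}$ collapses to $\alpha=1$, and one reads off precisely the four pairwise non-isomorphic algebras $\mathcal{E}_{4,1},\dots,\mathcal{E}_{4,4}$ (distinct because their three-dimensional factors are).

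For the algebras with no annihilator component I would run the three-step procedure at the end of Section \ref{method} for each $M$ of dimension $d\in\{1,2,3\}$, constructing $(4-d)$-dimensional extensions $M_{\theta}$ with $\theta\in\mathcal{H}(\mathcal{E}\times\mathcal{E},V)$ whose components $\theta_i$ are linearly independent and satisfying $\theta^{\perp}\cap ann(M)=0$. The first decisive filter is the dimension formula of Lemma \ref{cobound}: a genuine $(4-d)$-dimensional extension needs $(4-d)$ linearly independent classes in $\mathcal{H}(\mathcal{E}\times\mathcal{E},\mathbb{F})$, so every case with $\dim\mathcal{H}<4-d$ is vacuous. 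This immediately discards the $3$-dimensional extension of $\mathcal{E}_{1,1}$ and the $2$-dimensional extension of $\mathcal{E}_{2,2}$, both having $\dim\mathcal{H}=1$, and leaves only the extensions of $\mathcal{E}_{3,1},\mathcal{E}_{3,2},\mathcal{E}_{3,3}^{\alpha=1},\mathcal{E}_{3,4}$ and $\mathcal{E}_{2,1}$.

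To reduce each surviving family to a finite list of representatives I would act by the stabiliser $\mathcal{S}_{\theta}(M)\subseteq Aut(M)$, using the explicit automorphism groups recorded in the tables. Over an algebraically closed field every nonzero scalar is a square, so the diagonal scaling freedom collapses all the parameters $\beta,\gamma$ occurring in the various $\theta$'s; in each of the extensions of $\mathcal{E}_{3,1},\mathcal{E}_{3,2},\mathcal{E}_{3,4},\mathcal{E}_{2,1}$ this leaves a single orbit representative, yielding $\mathcal{E}_{4,5},\mathcal{E}_{4,6},\mathcal{E}_{4,9},\mathcal{E}_{4,10}$. The only family producing two representatives is the extension of $\mathcal{E}_{3,3}^{\alpha=1}$, where the orbits are $\theta_1=\delta_{e_3,e_3}$ and $\theta_2=\delta_{e_1,e_1}+\delta_{e_3,e_3}$, giving $\mathcal{E}_{4,7}$ and $\mathcal{E}_{4,8}$.

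The main obstacle is the non-isomorphism bookkeeping. Algebras arising from non-isomorphic quotients $M$ are automatically distinct by Corollary \ref{diff}, and the two groups (with and without annihilator component) are separated by the isomorphism invariant noted above, so the only pair that must be distinguished by hand is $\mathcal{E}_{4,7}=(\mathcal{E}_{3,3}^{\alpha=1})_{\theta_1}$ versus $\mathcal{E}_{4,8}=(\mathcal{E}_{3,3}^{\alpha=1})_{\theta_2}$. By Lemma \ref{iso} an isomorphism would require $\phi\in Aut(\mathcal{E}_{3,3}^{\alpha=1})$ and $\lambda\in\mathbb{F}^{\ast}$ with $\phi\theta_1=\lambda\theta_2$; substituting the description of $Aut(\mathcal{E}_{3,3}^{\alpha=1})$ forces the inconsistent system $a_{31}^{2}-a_{32}^{2}=\lambda$, $a_{33}^{2}=\lambda$, $a_{31}a_{32}=a_{31}a_{33}=a_{32}a_{33}=0$, which has no solution even over an algebraically closed field. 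Once this single incompatibility is checked, the list is complete and the count is $4+6=10$.
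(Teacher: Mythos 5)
Your proposal is correct and follows essentially the same route as the paper: the same split into algebras with and without annihilator component, the same use of Lemma \ref{construct}, Lemma \ref{cobound} (to kill the extensions of ${\mathcal{E}}_{1,1}$ and ${\mathcal{E}}_{2,2}$), the stabiliser action collapsing all square parameters over an algebraically closed field, Corollary \ref{diff} to separate extensions of non-isomorphic quotients, and the identical inconsistent system $a_{31}^{2}-a_{32}^{2}=\lambda$, $a_{33}^{2}=\lambda$, $a_{31}a_{32}=a_{31}a_{33}=a_{32}a_{33}=0$ to distinguish ${\mathcal{E}}_{4,7}$ from ${\mathcal{E}}_{4,8}$. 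No gaps beyond the level of detail the paper itself leaves implicit.
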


\begin{theorem}
Up to isomorphism there exist $16$ nilpotent evolution algebras of dimension
four with a natural basis $B=\left\{ e_{1},e_{2},e_{3},e_{4}\right\} $ over $%
\mathbb{R}
$ which are isomorphic to one of the following$\mathcal{\ }$pairwise
non-isomorphic nilpotent evolution algebras:

\begin{itemize}
\item $E_{4,1}:\mathcal{\ }$All products are zeros.

\item $E_{4,2}:e_{1}^{2}=e_{2}.$

\item $E_{4,3}:e_{1}^{2}=e_{3},e_{2}^{2}=e_{3}.$

\item $E_{4,4}:e_{1}^{2}=e_{3},e_{2}^{2}=-e_{3}.$

\item $E_{4,5}:e_{1}^{2}=e_{2},e_{2}^{2}=e_{3}.$

\item $E_{4,6}:e_{1}^{2}=e_{4},e_{2}^{2}=e_{4},e_{3}^{2}=e_{4}.$

\item $E_{4,7}:e_{1}^{2}=e_{4},e_{2}^{2}=e_{4},e_{3}^{2}=-e_{4}.$

\item $E_{4,8}:e_{1}^{2}=e_{2},e_{2}^{2}=e_{4},e_{3}^{2}=e_{4}.$

\item $E_{4,9}:e_{1}^{2}=e_{2},e_{2}^{2}=e_{4},e_{3}^{2}=-e_{4}.$

\item $E_{4,10}:e_{1}^{2}=e_{3},e_{2}^{2}=e_{3},e_{3}^{2}=e_{4}.$

\item $E_{4,11}:e_{1}^{2}=e_{3}+e_{4},e_{2}^{2}=e_{3},e_{3}^{2}=e_{4}.$

\item $E_{4,12}:e_{1}^{2}=e_{3},e_{2}^{2}=-e_{3},e_{3}^{2}=e_{4}.$

\item $E_{4,13}:e_{1}^{2}=e_{3}+e_{4},e_{2}^{2}=-e_{3},e_{3}^{2}=-e_{4}.$

\item $E_{4,14}:e_{1}^{2}=e_{3}+e_{4},e_{2}^{2}=-e_{3},e_{3}^{2}=e_{4}.$

\item $E_{4,15}:e_{1}^{2}=e_{2},e_{2}^{2}=e_{3},e_{3}^{2}=e_{4}$

\item $E_{4,16}:e_{1}^{2}=e_{3},e_{2}^{2}=e_{4}.$
\end{itemize}
\end{theorem}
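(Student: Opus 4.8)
The plan is to classify every $4$-dimensional nilpotent evolution algebra $\tilde{\mathcal{E}}$ over $\mathbb{R}$ by realizing it as an annihilator extension of a lower-dimensional algebra that is already known. Since $\tilde{\mathcal{E}}$ is nilpotent we have $ann(\tilde{\mathcal{E}})\neq 0$, so by Lemma \ref{construct} the algebra $\tilde{\mathcal{E}}$ is an annihilator extension of $M=\tilde{\mathcal{E}}/ann(\tilde{\mathcal{E}})$ by $V=ann(\tilde{\mathcal{E}})$, where $M$ is a nilpotent evolution algebra of dimension $m\in\{1,2,3\}$. These base algebras are exactly the ones produced by the classification in dimension $\le 3$ (Theorem \ref{clas3} and the two-dimensional table), so it suffices to run the extension procedure of Section \ref{method} on each of them and to collect the resulting $4$-dimensional algebras.

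I would first dispose of the algebras possessing an annihilator component. By the lemma characterizing annihilator components, these are precisely the $1$-dimensional trivial extensions of $3$-dimensional nilpotent evolution algebras, i.e. the direct sums $\mathcal{E}_{3,j}\oplus\mathcal{E}_{1,1}$. Over $\mathbb{R}$ there are five algebras in dimension three, namely $\mathcal{E}_{3,1}$, $\mathcal{E}_{3,2}$, $\mathcal{E}_{3,3}^{\alpha=1}$, $\mathcal{E}_{3,3}^{\alpha=-1}$ and $\mathcal{E}_{3,4}$ (recall $\mathbb{R}^{\ast}/\mathbb{R}^{\ast 2}=\{1,-1\}$), yielding the five algebras $E_{4,1},\dots,E_{4,5}$. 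For the algebras with no annihilator component I would apply the three-step procedure: for each base $\mathcal{E}$ of dimension $m$ I read off $\mathcal{H}(\mathcal{E}\times\mathcal{E},\mathbb{F})$ and $Aut(\mathcal{E})$ from the earlier tables, then parametrize $\theta\in\mathcal{H}(\mathcal{E}\times\mathcal{E},V)$ with $\dim V=4-m$, subject to the two conditions that the components $\theta_i$ be linearly independent in $\mathcal{H}(\mathcal{E}\times\mathcal{E},\mathbb{F})$ (so that $\mathcal{E}_\theta$ has no annihilator component) and that $\theta^{\perp}\cap ann(\mathcal{E})=0$ (so that $ann(\mathcal{E}_\theta)=V$). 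Since $\mathcal{H}(\mathcal{E}_{2,2}\times\mathcal{E}_{2,2},\mathbb{R})$ and $\mathcal{H}(\mathcal{E}_{1,1}\times\mathcal{E}_{1,1},\mathbb{R})$ are one-dimensional, there is no room for two or three independent components and these bases contribute nothing. The remaining cases are the $1$-dimensional extensions of $\mathcal{E}_{3,1},\mathcal{E}_{3,2},\mathcal{E}_{3,3}^{\pm 1},\mathcal{E}_{3,4}$ and the $2$-dimensional extension of $\mathcal{E}_{2,1}$, and for each I reduce $\theta$ to a short list of representatives using explicit elements $\phi_i\in\mathcal{S}_\theta(\mathcal{E})$.

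The main obstacle, and the only place where working over $\mathbb{R}$ rather than an algebraically closed field matters, is proving that the surviving representatives are pairwise non-isomorphic. Non-isomorphism between algebras built from different base algebras is automatic: Corollary \ref{diff} together with $\theta^{\perp}\cap ann(\mathcal{E})=0$ guarantees $\mathcal{E}_\theta/ann(\mathcal{E}_\theta)\cong\mathcal{E}$, so the isomorphism type of the quotient, and hence the already-settled lower-dimensional classification, separates them. Within a fixed base I invoke Lemma \ref{iso}: two extensions $\mathcal{E}_{\theta_1}$ and $\mathcal{E}_{\theta_2}$ are isomorphic if and only if some $\phi\in Aut(\mathcal{E})$ and $\lambda\in\mathbb{R}^{\ast}$ satisfy $\phi\theta_1=\lambda\theta_2$ modulo coboundaries. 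Writing this out in the explicit automorphism matrices produces systems of quadratic equations such as $a_{11}^2+a_{21}^2+a_{31}^2=\lambda$ together with $a_{13}^2+a_{23}^2+a_{33}^2=-\lambda$; the obstruction is then the elementary but essential fact that a sum of real squares cannot simultaneously equal a positive and a negative multiple of the same $\lambda$, so no solution exists. This sign phenomenon is exactly what splits a single orbit over $\mathbb{C}$ into several orbits over $\mathbb{R}$ and accounts for the additional algebras.

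Assembling the pieces, the five algebras with an annihilator component together with $E_{4,6},E_{4,7}$ from $\mathcal{E}_{3,1}$, $E_{4,8},E_{4,9}$ from $\mathcal{E}_{3,2}$, $E_{4,10},E_{4,11}$ from $\mathcal{E}_{3,3}^{\alpha=1}$, $E_{4,12},E_{4,13},E_{4,14}$ from $\mathcal{E}_{3,3}^{\alpha=-1}$, $E_{4,15}$ from $\mathcal{E}_{3,4}$ and $E_{4,16}$ from $\mathcal{E}_{2,1}$ give the sixteen pairwise non-isomorphic algebras claimed, and the procedure guarantees that every $4$-dimensional nilpotent evolution algebra over $\mathbb{R}$ occurs among them.
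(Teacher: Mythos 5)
Your proposal is correct and follows essentially the same route as the paper: realizing each $4$-dimensional nilpotent algebra as an annihilator extension via Lemma \ref{construct}, splitting off the five direct sums $\mathcal{E}_{3,j}\oplus\mathcal{E}_{1,1}$, running the Section \ref{method} procedure on $\mathcal{E}_{3,1},\mathcal{E}_{3,2},\mathcal{E}_{3,3}^{\pm 1},\mathcal{E}_{3,4},\mathcal{E}_{2,1}$ (with $\mathcal{E}_{2,2}$ and $\mathcal{E}_{1,1}$ excluded by $\dim\mathcal{H}=1$), and separating orbits over $\mathbb{R}$ by the same sum-of-squares sign obstruction through Lemma \ref{iso}. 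The case breakdown $5+2+2+2+3+1+1=16$ matches the paper's construction exactly.
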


\end{document}